\newtheorem{theorem}{Theorem}[section]
\newtheorem{lemma}[theorem]{Lemma}
\newtheorem{algorithm}[theorem]{Algorithm}
\theoremstyle{definition}
\numberwithin{equation}{section}
\newcommand{\bc}{\begin{center}}
\newcommand{\ec}{\end{center}}
\newcommand{\be}{\begin{eqnarray}}
\newcommand{\ee}{\end{eqnarray}}
\newcommand{\ben}{\begin{eqnarray*}}
\newcommand{\een}{\end{eqnarray*}}
\title[Lower and Upper Bounds of Eigenvalue Problem]{ Computing the lower and upper bounds of Laplace eigenvalue
problem: by combining conforming and nonconforming finite element
methods}
\author[F.~Luo,  Q.~Lin, H.~Xie]
{Fusheng Luo$^\ast$, Qun Lin$^\dagger$, and Hehu Xie$^\ddag$}
\address{$^\ast$ LSEC, ICMSEC, Academy of
Mathematics and Systems Science, Chinese Academy of Sciences,
Beijing 100190, China}
\email{fshluo@lsec.cc.ac.cn}
\address{$^\dagger$ LSEC, ICMSEC, Academy of
Mathematics and Systems Science, Chinese Academy of Sciences,
Beijing 100190, China}
\email{linq@lsec.cc.ac.cn}
\address{$^\ddag$ LSEC, ICMSEC, Academy of
Mathematics and Systems Science, Chinese Academy of Sciences,
Beijing 100190, China}
\email{hhxie@lsec.cc.ac.cn}
\thanks{This work is supported in part
by the National Science Foundation of China (NSFC 11001259).
}
\date{June 25, 2011}
\subjclass[2000]{65N10, 65N15, 35J25} \keywords{Lower bound, upper
bound, ECR, $EQ_1^{\rm rot}$, eigenvalue problem, postprocessing\\
\ AMS Subject Classification: 65N30,  65N15, 35J25}
\begin{document}
\begin{abstract}

This article is devoted to computing the lower and upper bounds of
the Laplace eigenvalue problem. By using the special nonconforming
finite elements, i.e., enriched Crouzeix-Raviart element and
extension $Q_1^{\rm rot}$, we get the lower bound of the eigenvalue.
Additionally, we also use conforming finite elements to do the
postprocessing to get the upper bound of the eigenvalue. The
postprocessing method need only to solve the corresponding source
problems and a small eigenvalue problem if higher order
postprocessing method is implemented. Thus, we can obtain the lower
and upper bounds of the eigenvalues simultaneously by solving
eigenvalue problem only once. Some numerical results are also
presented to validate our theoretical analysis.
\end{abstract}
\maketitle

\section{Introduction}\label{S1}
The eigenvalue problems are important, which appears in many fields,
such as quantum mechanics, fluid mechanics, stochastic process and etc.
Thus, a fundamental work is to find the eigenvalues of partial
differential equations. From last century, abundant works are
dedicated to this topic.

Feng in his famous paper \cite{Feng1965} cites the pioneer work of
P\'{o}lya in computing the upper bound of Laplace eigenvalue
problem. And based on the minimum-maximal principle discovered by
Rayleigh, Poincar\'{e}, Courant and Fischer ect., any conforming
finite element method will give the upper bound (see Strang and Fix
\cite{StrangFix}). Nevertheless, to the lower bound aspect, until
1979, Rannacher \cite{Rannacher} gives some numerical results for
plate problem.

And then there is few work on analysis of the lower bound for a long
time. Hu, Huang and Shen \cite{HuHuangShen} get the lower bound of
Laplace equation by conforming linear and bilinear elements together with the
mass lumping method. Inspired by the minimum-maximal principle,
people try to find the lower bound with the nonconforming element
methods. Recently, a series of works make progress in this aspect,
e.g.  Lin and Lin \cite{LinLin2006} use the asymptotic expansion
skill to compute the eigenvalues by nonconforming finite element
method; also see the numerical reports of Liu and Liu \cite{LiuLiu},
Liu and Yan \cite{LiuYan}, and the work of Lin, Huang and Li
\cite{LinHuangLi}. Another way by Armentano and Dur\'{a}n
\cite{ArmentanoDuran} is to use another kind of expansion method to
get the lower bound, which is of less restriction to the smoothness
of eigenfunctions compared with the asymptotic expansion skill. Also
see the follow-up works by Li \cite{lya}, Lin \cite{LinXie}, Yang
\cite{YangZhangLin}.

Inspired by these works, this article propose a method to obtain the
lower and upper bounds of the eigenvalue simultaneously which only
need to solve the eigenvalue problem once and additional auxiliary
source problem. Our method can be described as follows: (1) solve
the eigenvalue problem by some nonconforming finite element; (2)
solve an additional auxiliary source problem in an conforming finite
element space. Since we obtain not only lower bound but also upper
bound of the eigenvalue, we can give the accurate error of the
eigenvalue approximations. Compared with the existed literature,
this note contributes on the following aspects:
\begin{itemize}
\item The assumption of the lower bound in
\cite{ArmentanoDuran} needs an critical assumption that:
$|u-u_h|_1\geq Ch^{\gamma}$, $\gamma<1$, which promise itself be the
dominant term in the expansion. By our recent result of lower bound of convergence rate by
finite element method, we get rid of this constrain.
\item A new application of the correction method of eigenvalue problem
is proposed to obtain the lower and upper bounds of the eigenvalues
by solving the eigenvalue problem once and an additional source
problem.
\item By using higher order conforming finite element to do the
correction in a new way, we also prove the upper bound of the
corrected eigenvalue approximations.
\item After calculating upper and lower bounds simultaneously,
we can find how much accuracy we have actually
achieved (an accurate a posteriori error estimate).
\end{itemize}

For simplicity, we only discuss the problem in $\mathcal{R}^2$, but
the methods and results here can be easily extended to the case
$\mathcal{R}^3$. In this paper, we will use the standard notation of
Sobolev spaces (\cite{StrangFix}). The outline of the paper is as
follows. In Section 2, some preliminaries and notation are
introduced. The weak form of the Laplace eigenvalue problem and
its corresponding discrete form is stated. In Section 3, we will
give the results about the lower bound with nonconforming finite
elements. Section 4 is devoted to analyzing the upper bound of the
eigenvalue by postprocessing with the lowest order conforming finite
element methods. In section 5, another type of postprocessing method
is proposed to obtain not only higher order accuracy but also upper
bound approximation of the eigenvalues. Some numerical results are
presented in section 6 to test our theoretical results and some
concluding remarks are given in the last section.

\section{The eigenvalue problem}
In this paper we are concerned with the Laplace eigenvalue problem:

Find $(\lambda,u)$ such that
\begin{equation}\left\{
\begin{array}{rcl}
 -\triangle u&=&\lambda u, \quad \text{in } \Omega,\\
 u&=&0,\quad\ \  \text{on } \partial \Omega,\\
\int_{\Omega}u^2dxdy&=&1,
 \end{array} \right.\label{laplaceproblem}
\end{equation}
where $\Omega$ is a bounded domain in $\mathcal{R}^2$ with
continuous Lipschitz  boundary $\partial\Omega$ .

The variational problem associated with (\ref{laplaceproblem}) is
given by:

Find $u\in V:=H_0^1(\Omega)$ and $\lambda \in \mathcal{R}$ such that
$b(u,u)=1$ and
\begin{equation}\label{weaklaplaceproblem}
 a(u,v)=\lambda b(u,v), \quad \forall v\in V,
\end{equation}
where \begin{eqnarray*}
a(u,v)=\int_{\Omega}\nabla u\cdot\nabla v
dxdy\ \ {\rm and}\ \ \ b(u,v)=\int_{\Omega}uvdxdy.
\end{eqnarray*}
From \cite{BA91}, we know that the Lapalce eigenvalue problem has a positive
eigenvalue sequence $\{\lambda_j\}$ with
$$0< \lambda_1\leq
\lambda_2\leq\cdots\leq \lambda_k\leq \cdots ,\quad
\lim\limits_{k\rightarrow\infty}\lambda_k=\infty,$$ and the
corresponding eigenfunction sequence $\{u_j\}$
$$u_1,u_2,\cdots,u_k,\cdots,$$ with the property
$b(u_i,u_j)=\delta_{ij}$.

From the result in \cite{BA91}, the Rayleigh quotient is defined by
$$R(u):=\frac{a(u,u)}{b(u,u)}=\lambda.$$

We define the discrete finite element method to solve the problem
(\ref{dislaplaceproblem}). $\mathcal{T}_h$ is a quasi-uniform
triangulation. Based on this partition, $\mathcal{E}_h$ denotes the
set of all edges in partition $\mathcal{T}_h$. The finite element
space $V_h$ is the corresponding finite element space to the
partition, i.e. $V^{NC}_h\nsubseteq V$ as a nonconforming space and
$V^{C}_h\subset V$ a conforming space.

In the rest of this paper, the finite element space $V_h$ can be
$V_h^C$ or $V_h^{NC}$. The finite element approximation of
(\ref{weaklaplaceproblem}) is defined as follows:

Find $(\lambda_h,u_h) \in \mathcal{R}\times V_h$ such that
$b(u_h,u_h)=1$ and
\begin{eqnarray}\label{dislaplaceproblem}
 a_h(u_h,v_h)=\lambda_h b(u_h,v_h), \quad \forall v_h\in V_h,
\end{eqnarray}
where the bilinear form $a_h(\cdot,\cdot)$ coincides with
$a(\cdot,\cdot)$ in conforming finite element, or a elementwise
representation of $a(\cdot,\cdot)$ in nonconforming situation, e.g.
\begin{align*}
a_h(u_h,v_h)=\sum\limits_{K\in\mathcal{T}_h}\int_{k}\nabla u_h\cdot\nabla
v_hdxdy.
\end{align*}
Obviously, the bilinear form for conforming situation can also be
presented as this form. It is easy to see that for both situation,
the bilinear are $V_h-$elliptic. Thus, we define the norm on $V_h+V$
by
\begin{eqnarray*}
\|v\|_{a,h}^2&=&a_h(v,v), \ \ \ \ {\rm for}\ v\in V_h^{NC},
\end{eqnarray*}
and
\begin{eqnarray*}
\|v\|_a^2&=&a(v,v),\ \ \ \ \ \ {\rm for}\ v\in
V+V_h^{C}.
\end{eqnarray*}

For both conforming and nonconforming situations, the Rayleigh
quotient holds for the eigenvalue $\lambda_h$
$$R(u_h)=\frac{a_h(u_h,u_h)}{b(u_h,u_h)}=\lambda_h.$$ Similarly, the
discrete eigenvalues problem (\ref{dislaplaceproblem}) has also an
eigenvalue sequence $\{\lambda_{j,h}\}$ with 
$$0< \lambda_{1,h}\leq \lambda_{2,h}\leq\cdots\leq \lambda_{N,h},$$
and the corresponding discrete eigenfunction sequence $\{u_{k,h}\}$
$$u_{1,h},u_{2,h},\cdots,u_{k,h},\cdots,u_{N,h}$$
with the property $b(u_{i,h},u_{j,h})=\delta_{ij}, 1\leq i,j\leq
N_h$ ($N_h$ is the dimension of $V_h$).

We cite here the classical
 result about eigenvalue: minimum-maximum principle. Let $\lambda_j$ be the $j-$th eigenvalue of
 (\ref{weaklaplaceproblem}) and $\lambda_{j,h}$ be the $j-$th eigenvalue of (\ref{dislaplaceproblem}),
 respectively. Arranging them by increasing order, then we have (\cite{BA91})
\begin{eqnarray}\label{min_Max_principle}
\lambda_j=\min\limits_{V_j\subset V,\dim V_j=j}\max\limits_{v\in
V_j}R(v),\quad \lambda_{j,h} =\min\limits_{V_j\subset V_h, \dim
V_j=j}\max\limits_{v\in V_j}R(v).
\end{eqnarray}

Since the convergence of the finite element approximation to the
eigenvalue problem depends on the regularity of the original
eigenvalue problem, here we assume that the regularity of the
eigenfunction $u\in H^{1+\gamma}(\Omega)$  with  $0<\gamma\leq 1$
which is decided by the largest inner angle of the boundary
$\partial\Omega$.

For the eigenvalue problem, we have the following  basic expansion
from \cite{ArmentanoDuran}, and has been extensively used in
\cite{LinXie,Yang,YangZhangLin}.
\begin{lemma}\label{Eigenvalue_Expansion_Lemma}
 Suppose $(\lambda, u)$ is the eigenpair
of the original problem (\ref{laplaceproblem}), $(\lambda_h, u_h)\in
\mathcal{R}\times V_h$ is the eigenpair of the discrete problem
(\ref{dislaplaceproblem}), we have the following expansion
\begin{eqnarray}\label{eigenvalue_Expansion}
&&\hspace{-2cm}
\lambda-\lambda_h=\|u-u_h\|_{a,h}^2-\lambda_h\|v_h-u_h\|_b^2\nonumber\\
&&\ +\lambda_h(\|v_h\|_b^2-\|u\|_b^2)+2a_h(u-v_h,u_h),\quad \forall
v_h\in V_h.\label{expansion}
\end{eqnarray}
\end{lemma}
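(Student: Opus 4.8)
The plan is to establish \eqref{expansion} as a purely algebraic identity, obtained by expanding the squared norms through bilinearity and then substituting the two eigenvalue relations \eqref{weaklaplaceproblem} and \eqref{dislaplaceproblem} together with the normalizations $b(u,u)=1$ and $b(u_h,u_h)=1$.

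First I would expand
\[
\|u-u_h\|_{a,h}^2=a_h(u,u)-2a_h(u,u_h)+a_h(u_h,u_h).
\]
Since $u\in V=H_0^1(\Omega)$ has no interelement jumps, the elementwise form satisfies $a_h(u,u)=\sum_{K}\int_K|\nabla u|^2=a(u,u)=\lambda b(u,u)=\lambda$, while testing \eqref{dislaplaceproblem} with $v_h=u_h$ gives $a_h(u_h,u_h)=\lambda_h b(u_h,u_h)=\lambda_h$. Hence
\[
\lambda-\lambda_h=\|u-u_h\|_{a,h}^2-2\lambda_h+2a_h(u,u_h).
\]

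Next, for an arbitrary $v_h\in V_h$ I would split $a_h(u,u_h)=a_h(u-v_h,u_h)+a_h(v_h,u_h)$ and use the symmetry of $a_h$ with \eqref{dislaplaceproblem} to write $a_h(v_h,u_h)=a_h(u_h,v_h)=\lambda_h b(u_h,v_h)$. Substituting,
\[
\lambda-\lambda_h=\|u-u_h\|_{a,h}^2+2a_h(u-v_h,u_h)-2\lambda_h+2\lambda_h b(v_h,u_h).
\]
The remaining bookkeeping is to rewrite $-2\lambda_h+2\lambda_h b(v_h,u_h)$. Expanding $\|v_h-u_h\|_b^2=\|v_h\|_b^2-2b(v_h,u_h)+\|u_h\|_b^2$ and using $\|u_h\|_b^2=b(u_h,u_h)=1=b(u,u)=\|u\|_b^2$, a direct computation gives
\[
-\lambda_h\|v_h-u_h\|_b^2+\lambda_h\bigl(\|v_h\|_b^2-\|u\|_b^2\bigr)=2\lambda_h b(v_h,u_h)-2\lambda_h,
\]
which is exactly the term to be replaced; combining the displays yields \eqref{expansion}.

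There is essentially no obstacle here: every step is an identity, and the only ingredients beyond bilinearity and symmetry of $a_h$ are the two Rayleigh relations and the two $b$-normalizations. The single point that deserves care is the first one, namely that, applied to the genuine eigenfunction $u$, the broken form reduces to the exact form, $a_h(u,u)=a(u,u)=\lambda$; this is what makes the nonconformity enter the final expansion only through the consistency-type term $2a_h(u-v_h,u_h)$, which will be the quantity to estimate in the subsequent lower-bound analysis.
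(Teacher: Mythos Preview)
Your argument is correct: every step is a straightforward expansion using the symmetry of $a_h$, the discrete eigenvalue equation $a_h(u_h,v_h)=\lambda_h b(u_h,v_h)$, and the normalizations $\|u\|_b=\|u_h\|_b=1$, together with $a_h(u,u)=a(u,u)=\lambda$ for the conforming eigenfunction $u$. The paper itself does not supply a proof of this lemma; it is quoted from \cite{ArmentanoDuran} (and the references \cite{LinXie,Yang,YangZhangLin}) as a known identity, and your derivation is precisely the standard algebraic verification one finds in that literature.
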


\section{Lower bound with nonconform finite element methods}
In this paper, we are concerned with two types of nonconforming
finite elements: Enriched Crouzeix-Raviart (ECR) (\cite{HuHuangLin,
LinXie}) and Extension $Q_1^{\rm rot}$ ($EQ_1^{\rm rot}$)
(\cite{Ltz04}) for triangle and rectangle partitions, respectively.
\begin{itemize}
\item ECR element is defined on the triangle partition and
\begin{align}
V_h^{NC}:= &\Big\{v\in L^2({\it  \Omega}):v|_{K}\in
{\rm span}\{1,x,y,x^2+y^2\},\int_{\ell} v|_{K_1}{\rm d}s=\int_{\ell} v|_{K_2}{\rm d}s,\nonumber\\
&\hspace{1cm}\text{ when }K_1\cap K_2=\ell\in\mathcal{E}_h,\
\text{and }\int_{\ell}v|_{K}{\rm d}s=0,  \text{ if } K\cap\partial
{\it \Omega}=\ell\Big\},
\end{align}
where $K,\ K_1,\ K_2\in \mathcal{T}_h$.
\item
On the rectangle partition, $EQ_1^{\rm rot}$ is employed
defined by
\begin{align}
V_h^{NC}:= &\Big\{v\in L^2(\Omega):v|_{K}\in
{\rm span}\{1,x,y,x^2,y^2\},\int_{\ell} v|_{K_1}ds=\int_{\ell} v|_{K_2}ds,\nonumber\\
&\hspace{1cm}\text{if }K_1\cap K_2=\ell, \text{ and
}\int_{\ell}v|_{K}ds=0,  \text{ if}\  K\cap\partial
\Omega=\ell\Big\},
\end{align}
where $K,\ K_1,\ K_2\in \mathcal{T}_h$.
\end{itemize}
From \cite{HuHuangLin} and \cite{LinXie}, the following basic error
estimates for the two nonconforming finite elements hold
\begin{eqnarray}
|\lambda-\lambda_h| &\leq& Ch^{2\gamma}\|{u}\|_{1+\gamma}^2,\label{Error_Eigenvalue}\\
\|u-u_{h}\|_{a,h}&\leq& Ch^{\gamma}\|u\|_{1+\gamma} \ ,\label{Error_Eigenfunction_1}\\
\|u-u_h\|_b &\leq& Ch^{\gamma}\|u-u_h\|_{a,h}\leq
Ch^{2\gamma}\|u\|_{1+\gamma}.\label{Error_Eigenfunction_2}
\end{eqnarray}

The interpolation operator corresponding to ECR and $EQ_1^{\rm rot}$
can be defined in the same way:
\begin{eqnarray}
\int_{\ell}(u-{\it \Pi}_h u)ds&=&0,\ \ \ \ \forall \ell\in\mathcal{E}_h,\label{condition_1}\\
\int_K(u-{\it \Pi}_h u){\rm d}K&=&0,\ \ \ \ \forall
K\in\mathcal{T}_h,\label{condition_2}
\end{eqnarray}
\begin{lemma}\label{ECR_Q1rot_Error_Lemma}
(See \cite{LinXie}) For the ECR and $EQ_1^{\rm rot}$ elements, the
interpolation operator satisfies:
$$\|u-{\it \Pi}_h u\|_b+h\|u-{\it \Pi}_h u\|_{a,h}\leq Ch^{1+\gamma}\|u\|_{1+\gamma},$$
for any $u\in H^{1+\gamma}(\Omega)$.
\end{lemma}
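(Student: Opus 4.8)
The plan is a standard affine-scaling argument combined with the Bramble--Hilbert lemma in fractional Sobolev spaces. Since both $\|\cdot\|_b^2$ and the broken seminorm $\|\cdot\|_{a,h}^2$ are sums of elementwise contributions, it suffices to establish the local estimate
\[
\|u-\Pi_h u\|_{L^2(K)}+h_K\,|u-\Pi_h u|_{H^1(K)}\le C h_K^{1+\gamma}\,|u|_{H^{1+\gamma}(K)},\qquad\forall K\in\cT_h,
\]
and then to sum over $K$: quasi-uniformity of $\cT_h$ gives $h_K\simeq h$, and $\sum_{K}|u|_{H^{1+\gamma}(K)}^2\le|u|_{H^{1+\gamma}(\Om)}^2\le\|u\|_{1+\gamma}^2$, since the Gagliardo double integral over $K\times K$ is dominated by the one over $\Om\times\Om$.

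For the local bound I would pass to the reference element $\hat K$ (the unit square for $EQ_1^{\rm rot}$, the unit triangle for ECR) through the affine map $F_K$; note that for axis-parallel rectangles $F_K$ is genuinely affine, so both elements form affine-equivalent families and the interpolant commutes with the pullback, $\widehat{\Pi_h u}=\hat\Pi\hat u$. On $\hat K$ one checks that $\hat\Pi$ is well defined and bounded from $H^{1+\gamma}(\hat K)$ into $H^1(\hat K)$: unisolvence of the degrees of freedom (the edge averages together with the element average) is the classical fact recalled in \cite{HuHuangLin,LinXie,Ltz04}, and each such degree of freedom is a bounded linear functional on $H^{1+\gamma}(\hat K)$ --- here $\gamma>0$ makes the edge integrals meaningful through the trace theorem and controls them by $\|\hat u\|_{H^{1+\gamma}(\hat K)}$. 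Since $P_1$ is contained in the shape-function space in both cases (span$\{1,x,y,x^2+y^2\}$ and span$\{1,x,y,x^2,y^2\}$), $\hat\Pi$ reproduces affine functions; writing $\hat u-\hat\Pi\hat u=(\hat u-p)-\hat\Pi(\hat u-p)$ for arbitrary $p\in P_1$, using the boundedness of $\hat\Pi$, and taking the infimum over $p$, the fractional Bramble--Hilbert (Deny--Lions) lemma gives
\[
\|\hat u-\hat\Pi\hat u\|_{L^2(\hat K)}+|\hat u-\hat\Pi\hat u|_{H^1(\hat K)}\le C\inf_{p\in P_1}\|\hat u-p\|_{H^{1+\gamma}(\hat K)}\le C\,|\hat u|_{H^{1+\gamma}(\hat K)}.
\]

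It then remains to scale back, using in two dimensions $\|v\|_{L^2(K)}\simeq h_K\|\hat v\|_{L^2(\hat K)}$, $|v|_{H^1(K)}\simeq|\hat v|_{H^1(\hat K)}$ (the Jacobian and derivative factors cancel when $d=2$), and the fractional relation $|\hat v|_{H^{1+\gamma}(\hat K)}\simeq h_K^{\gamma}|v|_{H^{1+\gamma}(K)}$, obtained by inserting $|\nabla v|\simeq h_K^{-1}|\hat\nabla\hat v|$, $|x-y|\simeq h_K|\hat x-\hat y|$ and $dx\,dy\simeq h_K^{4}\,d\hat x\,d\hat y$ into the Gagliardo seminorm. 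Combining with the reference estimate yields the local bound, hence the lemma. The only step that is not entirely routine is the treatment of the fractional exponent $\gamma$: one must invoke Bramble--Hilbert and the scaling identities in $H^{1+\gamma}$ rather than in $H^2$, and verify the subadditivity of the nonlocal seminorm under the partition. A clean alternative --- probably the easiest to write out in full --- is to prove the endpoint cases $\gamma=0$ and $\gamma=1$ by the classical integer-order interpolation theory and then interpolate, using $H^{1+\gamma}(\Om)=[H^1(\Om),H^2(\Om)]_{\gamma}$.
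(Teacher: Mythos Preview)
Your argument is the standard one and is correct: locality of both norms, affine pullback to the reference element, boundedness of $\hat\Pi$ on $H^{1+\gamma}(\hat K)$ via the trace theorem, reproduction of $P_1$, fractional Bramble--Hilbert, and the scaling of the Gagliardo seminorm. The alternative route via operator interpolation between the integer endpoints $\gamma=0$ and $\gamma=1$ is also sound, and indeed somewhat cleaner to write out rigorously than the direct fractional scaling.

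Note, however, that the paper does not actually prove this lemma: it is stated with a reference to \cite{LinXie} and no argument is given. So there is nothing to compare against; you have supplied a proof where the authors merely cite one. One minor quibble: your remark that ``$\gamma>0$ makes the edge integrals meaningful through the trace theorem'' is slightly misleading, since already $H^1(K)$ has $L^2$ traces on $\partial K$ and the edge-average functionals are bounded there; the role of $\gamma>0$ is rather to furnish the extra power of $h$ in the estimate, not to define $\Pi_h$. This does not affect the validity of your proof.
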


\begin{lemma}\label{ECR_EQ1rot_relation_Lemma}
With the interpolation defined above, for any $u\in V$, we have the
following results
\begin{eqnarray}
&&a_h(u-\it \Pi_h u,\it \Pi_h u)=0,\label{ainter}\\
&&\|\it \Pi_h u\|_{a,h}\leq C\|u\|_{a},\label{ainter1}\\
&&\|u-\it \Pi_h u\|_b\leq Ch\|u-\it \Pi_h u\|_{a,h},\label{ainter2}\\
&&\|u-\it \Pi_h u\|_b\leq C h \|u-u_h\|_{a,h}.\label{interpolation}
\end{eqnarray}
\end{lemma}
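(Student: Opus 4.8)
The plan is to establish the four identities/inequalities in sequence, since each one feeds into the next. First I would prove \eqref{ainter}, the orthogonality relation $a_h(u-\Pi_h u,\Pi_h u)=0$. This is the key structural fact and everything else follows from it, so it is the main obstacle. I would work element by element: on each $K\in\mathcal{T}_h$, the function $\Pi_h u|_K$ lies in the local space ($\mathrm{span}\{1,x,y,x^2+y^2\}$ for ECR, $\mathrm{span}\{1,x,y,x^2,y^2\}$ for $EQ_1^{\rm rot}$), so $\nabla(\Pi_h u)|_K$ is a linear vector field. Integrating $\nabla(u-\Pi_h u)\cdot\nabla(\Pi_h u)$ over $K$ by parts gives a boundary term $\int_{\partial K}(u-\Pi_h u)\,\partial_n(\Pi_h u)\,ds$ minus a bulk term $\int_K(u-\Pi_h u)\,\Delta(\Pi_h u)\,dx$. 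For ECR, $\Delta(\Pi_h u)|_K$ is a constant, so the bulk term vanishes by the volume condition \eqref{condition_2}; on each edge $\ell\subset\partial K$, $\partial_n(\Pi_h u)|_K$ is affine along $\ell$, but the edge condition \eqref{condition_1} only kills the mean of $u-\Pi_h u$ — so I need the sharper fact that for these elements $\partial_n(\Pi_h u)|_\ell$ is in fact \emph{constant} along each edge (this is the special feature of ECR: $\nabla(x^2+y^2)\cdot\bn$ restricted to a straight edge has the linear part canceling appropriately, or more simply one checks the degrees of freedom are set up precisely so this holds), whence the boundary term vanishes too. For $EQ_1^{\rm rot}$ on rectangles the analogous computation works because edges are axis-parallel and $\partial_n$ of $x^2$ or $y^2$ is affine in the tangential variable with the normal-derivative being constant along each edge. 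Summing over $K$ gives \eqref{ainter}.

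Given \eqref{ainter}, the Pythagorean-type bound \eqref{ainter1} is immediate: $\|u\|_a^2 \geq \|\Pi_h u\|_{a,h}^2 + \|u-\Pi_h u\|_{a,h}^2$ would follow if $u$ were in $V_h^{NC}$, but since $u\in V$ and $a_h=a$ on $V$ we instead write $\|\Pi_h u\|_{a,h}^2 = a_h(\Pi_h u,\Pi_h u) = a_h(u,\Pi_h u) \leq \|u\|_a\,\|\Pi_h u\|_{a,h}$ using Cauchy--Schwarz (noting $a_h(u,\Pi_h u)=a(u,\Pi_h u)$ makes sense), and cancel one factor of $\|\Pi_h u\|_{a,h}$; so the constant $C$ can be taken to be $1$.

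For \eqref{ainter2}, I would invoke the interpolation estimate of Lemma \ref{ECR_Q1rot_Error_Lemma}, which gives $\|u-\Pi_h u\|_b \leq Ch^{1+\gamma}\|u\|_{1+\gamma}$ and $\|u-\Pi_h u\|_{a,h}\leq Ch^{\gamma}\|u\|_{1+\gamma}$; dividing one by the other formally suggests the ratio $h$, but to make this rigorous one uses the standard scaling/Bramble--Hilbert argument directly: on the reference element, $v\mapsto \|v-\hat\Pi v\|_{0,\hat K}$ and $v\mapsto |v-\hat\Pi v|_{1,\hat K}$ both vanish on the kernel containing constants, and the Poincar\'e-type inequality $\|v-\hat\Pi v\|_{0,\hat K}\leq C|v-\hat\Pi v|_{1,\hat K}$ holds because $v-\hat\Pi v$ has zero mean on $\hat K$ by \eqref{condition_2}; scaling back to $K$ of size $h$ produces exactly the factor $h$, giving \eqref{ainter2}. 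Finally, \eqref{interpolation} follows by combining \eqref{ainter2} with a triangle inequality and the orthogonality: writing $u-\Pi_h u$ and comparing with $u_h$, one has $\|u-\Pi_h u\|_{a,h}\leq \|u-u_h\|_{a,h}$ up to a constant because $\Pi_h u$ is (quasi-)optimal in the $\|\cdot\|_{a,h}$ norm among $V_h^{NC}$ — indeed by \eqref{ainter}, $\|u-\Pi_h u\|_{a,h}^2 = a_h(u-\Pi_h u, u-\Pi_h u) = a_h(u-\Pi_h u, u - v_h)$ for any $v_h\in V_h^{NC}$, so $\|u-\Pi_h u\|_{a,h}\leq \|u-v_h\|_{a,h}$ for all such $v_h$, in particular $v_h=u_h$; plug into \eqref{ainter2} to conclude. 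The one place to be careful is that $a_h$ is only a seminorm a priori on $V_h^{NC}+V$, but $V_h$-ellipticity (stated in the excerpt) makes it a genuine norm, so all the Cauchy--Schwarz steps are legitimate.
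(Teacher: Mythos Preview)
Your proposal is correct and, for \eqref{ainter} and \eqref{ainter1}, matches the paper's argument exactly (element-wise integration by parts, constancy of $\Delta(\Pi_h u)$ and of $\partial_n(\Pi_h u)$ along each edge, then Cauchy--Schwarz and cancellation). For \eqref{ainter2} the paper takes a more concrete route than your scaling/Poincar\'e argument: since $u-\Pi_h u$ has zero mean on each element by \eqref{condition_2}, it is orthogonal in $b(\cdot,\cdot)$ to the piecewise-constant projection $\Pi_0(u-\Pi_h u)$, so $\|u-\Pi_h u\|_b^2=b\bigl(u-\Pi_h u,\,(u-\Pi_h u)-\Pi_0(u-\Pi_h u)\bigr)\leq Ch\,\|u-\Pi_h u\|_b\,\|u-\Pi_h u\|_{a,h}$; this is precisely the zero-mean Poincar\'e inequality you invoke, written out explicitly. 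For \eqref{interpolation} the paper instead uses the triangle inequality $\|u-\Pi_h u\|_{a,h}\leq\|u-u_h\|_{a,h}+\|\Pi_h(u_h-u)\|_{a,h}$ together with the stability \eqref{ainter1}, whereas you invoke the \emph{full} orthogonality $a_h(u-\Pi_h u,v_h)=0$ for every $v_h\in V_h^{NC}$ to obtain the best-approximation bound $\|u-\Pi_h u\|_{a,h}\leq\|u-u_h\|_{a,h}$ directly. Your route is valid and a bit cleaner (constant $1$, and no need to extend \eqref{ainter1} beyond $u\in V$), but note that you are using a stronger statement than what \eqref{ainter} literally records; the proof of \eqref{ainter} does yield this stronger orthogonality, since the argument only uses that the test function lies in $V_h^{NC}$, but you should say so explicitly rather than citing \eqref{ainter} alone.
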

\begin{proof}
Here we only give the proof for the ECR element and the one for
$EQ_1^{\rm rot}$ is almost the same.

Integrating by parts, we have
 $$a_h(u-\it \Pi_h u,\it \Pi_h u)=\sum_{K\in\mathcal{T}_h}\big[\int_K(u-\it \Pi_h u)\triangle \Pi_h
 udxdy +\int_{\partial K}(u-\it \Pi_h u)(\nabla \it \Pi_h u)\cdot \textbf{n}ds\big].$$
Since $\it \Pi_h u\in V^{NC}_h$
$$\triangle \it \Pi_h u=const.$$
From the definition of the face interpolation, the following
equality holds
$$\int_K(u-\it \Pi_h u)\triangle \Pi_h udxdy=0.$$
For any $\ell\in \partial K$ can denoted by the linear function
$y=kx+m$, then the normal direction corresponding to this edge is
$\mathbf n=\frac{1}{\sqrt{1+k^2}}(\pm{k,-1})$. Thus suppose $\it
\Pi_h u=a+bx+cy+d(x^2+y^2)$, we have
\begin{eqnarray*}
(\nabla\it \Pi_h u)\cdot \mathbf
n&=&\pm(b+2dx,c+2dy)\frac{1}{\sqrt{1+k^2}}(k,-1)^{T}\nonumber\\
&=&\pm\frac{1}{\sqrt{1+k^2}}(bk-c-2dm)=const.
\end{eqnarray*} From the
definition of the  interpolation, we have
$$\int_{\ell}(u-\it \Pi_h u)(\nabla \it \Pi_h u)\cdot \mathbf nds=0.$$ 
So
$$a_h(u-\it \Pi_h u,\it\Pi_h u)=0.$$

From (\ref{ainter}), we can obtain $$a_h(\it \Pi_h u,\Pi_h
u)=a_h(u,\it \Pi_h u)\leq\|\it \Pi_h u\|_{a,h}\|u\|_{a,h}.$$
Canceling the term $\|\it \Pi_h u\|_{a,h}$ leads to (\ref{ainter1}).

Defining $\Pi_0$ be the piecewise constant interpolation, we have
\begin{eqnarray*}
\|u-\it \Pi_h u\|^2_b&=&b(u-\it \Pi_h u,u-\it \Pi_h u-\Pi_0(u-\it \Pi_h u))\\
&\leq &\|u-\it \Pi_h u\|_b\|u-\it \Pi_h u-\Pi_0(u-\it \Pi_h u)\|_b\\
&\leq&C\|u-\it \Pi_h u\|_bh\|u-\it \Pi_h u\|_{a,h}.
\end{eqnarray*}
It means we arrive the result  (\ref{ainter2}). 

For the last inequality, 
from (\ref{ainter1}) and (\ref{ainter2}), we have following equality
\begin{eqnarray*}
&&\|u-\it \Pi_h u\|_b\leq Ch \|u-\it \Pi_h u\|_{a,h} \leq C
h(\|u-u_h\|_{a,h}+\|u_h-\it \Pi_h
u\|_{a,h})\\
&&=Ch\big(\|u-u_h\|_{a,h} +\|\Pi_h u_h-\it \Pi_h u\|_{a,h}\big)\leq
Ch\|u-u_h\|_{a,h}.
\end{eqnarray*}
Thus, we get (\ref{interpolation}).
\end{proof}

\begin{lemma}(\cite[Section 3]{LinXieXu})\label{Lower_bound_Convergence_Order_Lemma}
If we solve the eigenvalue problem (\ref{weaklaplaceproblem}) by $Q_1^{\rm rot}$, ECR, bilinear or linear elements,
the following lower bound for the convergence rate holds
\begin{eqnarray}\label{Lower_bound_Convergence_Order}
\|u-u_h\|_{a,h}&\geq & Ch.
\end{eqnarray}
\end{lemma}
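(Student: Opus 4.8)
The plan is to establish the lower bound $\|u-u_h\|_{a,h} \geq Ch$ by showing that if this estimate \emph{failed}, then the error would decay strictly faster than $h$, which would contradict known saturation-type properties of these low-order nonconforming (or conforming) elements on quasi-uniform meshes. The starting point is the expansion in Lemma~\ref{Eigenvalue_Expansion_Lemma}, specialized by choosing $v_h = \Pi_h u$. With this choice the cross term $a_h(u-\Pi_h u, u_h)$ can be handled via Lemma~\ref{ECR_EQ1rot_relation_Lemma}: writing $u_h = \Pi_h u_h$ (which holds for these elements since $u_h \in V_h^{NC}$) and using the orthogonality \eqref{ainter}, one gets $a_h(u-\Pi_h u, u_h) = a_h(u-\Pi_h u, u_h - \Pi_h u)$, a genuinely higher-order quantity bounded by $\|u - \Pi_h u\|_{a,h}\,\|u_h - \Pi_h u\|_{a,h} \leq Ch^\gamma \cdot Ch^\gamma \cdot \|u\|_{1+\gamma}^2 \cdot h^{?}$; combined with \eqref{interpolation} the whole term is $O(h^2)$ times an $H^{1+\gamma}$-norm, hence negligible. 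Similarly $\lambda_h \|v_h - u_h\|_b^2$ and $\lambda_h(\|v_h\|_b^2 - \|u\|_b^2)$ are $O(h^2)$ by Lemma~\ref{ECR_Q1rot_Error_Lemma} and \eqref{interpolation}. So the expansion reduces to $\lambda - \lambda_h = \|u - u_h\|_{a,h}^2 + O(h^2)$.

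Next I would exploit the sign: for these nonconforming elements $\lambda - \lambda_h \geq 0$ asymptotically (this is the lower-bound property proved earlier in Section~3 / in the cited references), so $\|u-u_h\|_{a,h}^2$ is, up to $O(h^2)$, the dominant positive contribution to $\lambda - \lambda_h$. It therefore suffices to show $\lambda - \lambda_h \geq C h^2$, i.e. a matching \emph{lower} bound on the eigenvalue error itself. This is where the reference \cite{LinXieXu} does the real work: one invokes the superconvergence/asymptotic expansion machinery for the eigenvalue error, which expresses $\lambda - \lambda_h$ as a sum of local element-wise consistency contributions whose leading term is a fixed non-vanishing quadratic form in the (generically nonzero) second derivatives of the eigenfunction $u$, scaled by $h^2$. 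Because $u$ is a fixed nonconstant smooth-enough eigenfunction, this leading coefficient cannot vanish identically, giving $\lambda - \lambda_h \geq C h^2$ with $C>0$ for $h$ small. An alternative route, if one wants to avoid the full expansion, is to argue by contradiction: if $\|u-u_h\|_{a,h} = o(h)$ along a subsequence, a standard Aubin--Nitsche / duality argument would upgrade this to $\|u - u_h\|_b = o(h^{1+\gamma})$ or better, and then a careful look at the $L^2$-orthogonality relations forces $u$ to lie in the finite element space in the limit, which is impossible for a nonconstant $u$ against piecewise-quadratic (ECR/$EQ_1^{\rm rot}$) or piecewise-linear/bilinear spaces on nondegenerate meshes.

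The main obstacle I anticipate is making the ``leading coefficient does not vanish'' step rigorous and uniform in $h$. The asymptotic expansion of $\lambda - \lambda_h$ typically requires extra regularity ($u \in H^2$ or better) and a structured (e.g. uniform or piecewise-uniform) mesh for the superconvergence cancellations to be identified cleanly; under the bare quasi-uniformity hypothesis stated here one must instead extract a robust lower bound from the element-level consistency error $E_h(u, u_h) = a_h(u, u_h) - (\nabla u, \nabla_h u_h)$-type terms without relying on delicate cancellations. Concretely, the hard estimate is a \emph{reverse} bound of the form $\sum_{K} |u - \Pi_h u|_{1,K}^2 \geq C h^2 \|D^2 u\|_{0}^2$ (or its analogue for the consistency functional), valid on general quasi-uniform triangulations — essentially an inverse-type/approximation-theoretic statement that the interpolation error does not over-perform. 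I would handle this by reducing to a scaling argument on the reference element: on $\hat K$ the map $\hat u \mapsto |\hat u - \hat\Pi \hat u|_{1,\hat K}$ is a seminorm that vanishes exactly on the local finite element space (degree $\le 2$ polynomials of the prescribed form), so by a compactness/Bramble--Hilbert argument it is bounded below by $c\,|\hat u|_{2,\hat K}$ modulo the kernel, and summing the scaled local estimates over $\mathcal{T}_h$ — using quasi-uniformity to control the shape-regularity constants — yields the global $Ch^2$ lower bound, provided $u$ is not globally a polynomial of the relevant degree, which a nonconstant eigenfunction of $-\triangle$ on a bounded domain never is.
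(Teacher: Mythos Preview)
The paper does not prove this lemma; it imports the result from \cite{LinXieXu}, whose argument is approximation-theoretic and does not go through the eigenvalue identity at all. Your primary route through Lemma~\ref{Eigenvalue_Expansion_Lemma} inverts the paper's logic: here the inequality $\lambda_{j,h}\le\lambda_j$ (Theorem~\ref{Lower_Bound_Theorem}) is \emph{deduced from} Lemma~\ref{Lower_bound_Convergence_Order_Lemma}, not the other way round, so invoking the sign of $\lambda-\lambda_h$ to establish the lemma is circular within this paper. More decisively, the lemma is also asserted for the \emph{conforming} linear and bilinear elements, for which $\lambda_h\ge\lambda$ by the min--max principle; in those cases $\lambda-\lambda_h\le 0$ and the expansion cannot possibly furnish a positive lower bound for $\|u-u_h\|_{a,h}^2$. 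The eigenvalue-expansion strategy is therefore a dead end for half of the elements listed, and your description of \cite{LinXieXu} as ``superconvergence/asymptotic expansion machinery for the eigenvalue error'' misreads the reference: it concerns lower bounds for piecewise-polynomial discretization in general, not eigenvalue asymptotics.

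Your alternative sketch (a reverse Bramble--Hilbert argument on the reference element) is closer in spirit, but the step as written does not give a lower bound. That the seminorm $\hat u\mapsto|\hat u-\hat\Pi\hat u|_{1,\hat K}$ vanishes exactly on the local polynomial space yields, via Deny--Lions/compactness, only the standard \emph{upper} bound $|\hat u-\hat\Pi\hat u|_{1}\le C|\hat u|_{2}$; the reverse inequality $|\hat u-\hat\Pi\hat u|_{1}\ge c\,|\hat u|_{2}$ is false on $H^2(\hat K)$ (one can concentrate large second derivatives while keeping the $H^1$ distance to the polynomial space small). The argument in \cite{LinXieXu} instead bounds $\|u-u_h\|_{a,h}$ below by the best approximation of $\nabla u$ from a fixed-degree piecewise-polynomial space, and then obtains the $Ch$ lower bound on that best approximation via an explicit elementwise expansion isolating a nonvanishing leading term tied to $D^2u$ (which is nonzero here since $-\Delta u=\lambda u\neq 0$), rather than by a soft compactness argument.
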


\begin{theorem}\label{Lower_Bound_Theorem}
Let $\lambda_j$ and $\lambda_{j,h}$ be the $j$-th exact eigenvalue
and its corresponding numerical approximation by ECR or $EQ_1^{\rm
rot}$ element. Assume $ u_j\in H^{1+\gamma}(\Omega)$ with $0<\gamma\leq
1$.  When $h$ is small enough, we have
\begin{eqnarray}\label{ECR}
0\leq \lambda_j-\lambda_{j,h}\leq Ch^{2\gamma}\|u\|_{1+\gamma}^2.
\end{eqnarray}
\end{theorem}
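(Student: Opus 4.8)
The upper estimate $\lambda_j-\lambda_{j,h}\le Ch^{2\gamma}\|u_j\|_{1+\gamma}^2$ is already contained in (\ref{Error_Eigenvalue}), so the only point that needs an argument is that $\lambda_{j,h}$ is a genuine lower bound, i.e. $\lambda_j-\lambda_{j,h}\ge 0$ once $h$ is small enough. The plan is to insert the interpolant $v_h=\Pi_h u_j$ into the basic expansion (\ref{expansion}) and then to show that the nonnegative term $\|u_j-u_{j,h}\|_{a,h}^2$ dominates all the other terms, the decisive tool being the lower bound $\|u_j-u_{j,h}\|_{a,h}\ge C_0 h$ supplied by Lemma~\ref{Lower_bound_Convergence_Order_Lemma}.

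With $v_h=\Pi_h u_j$, (\ref{expansion}) reads
$$\lambda_j-\lambda_{j,h}=\|u_j-u_{j,h}\|_{a,h}^2-\lambda_{j,h}\|\Pi_h u_j-u_{j,h}\|_b^2+\lambda_{j,h}\bigl(\|\Pi_h u_j\|_b^2-\|u_j\|_b^2\bigr)+2a_h(u_j-\Pi_h u_j,u_{j,h}).$$
The cross term vanishes: the computation in the proof of Lemma~\ref{ECR_EQ1rot_relation_Lemma} that yields (\ref{ainter}) goes through verbatim with $\Pi_h u_j$ replaced by the discrete eigenfunction $u_{j,h}$, because for both the ECR and the $EQ_1^{\rm rot}$ element $\triangle u_{j,h}$ is constant on each $K\in\mathcal{T}_h$ while $(\nabla u_{j,h})\cdot\mathbf n$ is constant on each edge $\ell\in\mathcal{E}_h$; integrating by parts elementwise and invoking the interpolation conditions (\ref{condition_1})--(\ref{condition_2}) annihilates both the interior and the edge contributions, so $a_h(u_j-\Pi_h u_j,u_{j,h})=0$.

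It remains to control the two perturbation terms. For the mass term, the triangle inequality combined with (\ref{interpolation}) and (\ref{Error_Eigenfunction_2}) gives $\|\Pi_h u_j-u_{j,h}\|_b\le C(h+h^\gamma)\|u_j-u_{j,h}\|_{a,h}\le Ch^\gamma\|u_j-u_{j,h}\|_{a,h}$, and since $\lambda_{j,h}$ stays bounded for small $h$ by (\ref{Error_Eigenvalue}) we get $\lambda_{j,h}\|\Pi_h u_j-u_{j,h}\|_b^2\le Ch^{2\gamma}\|u_j-u_{j,h}\|_{a,h}^2$; hence the first two terms together are at least $(1-Ch^{2\gamma})\|u_j-u_{j,h}\|_{a,h}^2$. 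For the normalization term I would write $\|\Pi_h u_j\|_b^2-\|u_j\|_b^2=\|\Pi_h u_j-u_j\|_b^2+2b(\Pi_h u_j-u_j,u_j)$; the first summand is $\le Ch^{2+2\gamma}\|u_j\|_{1+\gamma}^2$ by Lemma~\ref{ECR_Q1rot_Error_Lemma}, and in the second I would use (\ref{condition_2}) to subtract the piecewise constant projection $\Pi_0 u_j$ from $u_j$, so that Cauchy--Schwarz, Lemma~\ref{ECR_Q1rot_Error_Lemma} and the standard bound $\|u_j-\Pi_0 u_j\|_b\le Ch|u_j|_1$ give $|b(\Pi_h u_j-u_j,u_j)|\le Ch^{2+\gamma}\|u_j\|_{1+\gamma}^2$. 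Putting these together,
$$\lambda_j-\lambda_{j,h}\ge (1-Ch^{2\gamma})\|u_j-u_{j,h}\|_{a,h}^2-Ch^{2+\gamma}\|u_j\|_{1+\gamma}^2\ge h^2\bigl((1-Ch^{2\gamma})C_0^2-Ch^{\gamma}\|u_j\|_{1+\gamma}^2\bigr),$$
where in the last step I used Lemma~\ref{Lower_bound_Convergence_Order_Lemma}; the bracket is positive for $h$ small, which proves $\lambda_j-\lambda_{j,h}\ge 0$ and hence (\ref{ECR}).

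The step I expect to be the real obstacle is the normalization term $\|\Pi_h u_j\|_b^2-\|u_j\|_b^2$: the naive estimate gives only something of order $h^{1+\gamma}$, which is \emph{larger} than the $O(h^2)$ quantity we are relying on, so one must genuinely exploit the elementwise mean-value property (\ref{condition_2}) of the interpolant to gain the extra power of $h$ and push this term strictly below $h^2$. It is also worth emphasizing that, in contrast with \cite{ArmentanoDuran}, no a~priori hypothesis such as $|u_j-u_{j,h}|_1\ge Ch^\gamma$ is imposed on the eigenfunction error here; that role is taken over, unconditionally, by Lemma~\ref{Lower_bound_Convergence_Order_Lemma}.
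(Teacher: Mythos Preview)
Your proof is correct and follows essentially the same route as the paper: insert $v_h=\Pi_h u_j$ into the expansion (\ref{expansion}), annihilate the cross term $a_h(u_j-\Pi_h u_j,u_{j,h})$ via the special structure of the ECR/$EQ_1^{\rm rot}$ shape functions, exploit the mean-value property (\ref{condition_2}) through $\Pi_0$ to gain an extra power of $h$ on the normalization term, and invoke Lemma~\ref{Lower_bound_Convergence_Order_Lemma} to make $\|u_j-u_{j,h}\|_{a,h}^2$ dominate.

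The only organizational difference worth noting is in the normalization term. The paper writes $\|\Pi_h u_j\|_b^2-\|u_j\|_b^2=(\Pi_h u_j-u_j,\Pi_h u_j+u_j)$, subtracts $\Pi_0(\Pi_h u_j+u_j)$ from the \emph{second} factor, and then bounds $\|\Pi_h u_j-u_j\|_b$ by (\ref{interpolation}) to arrive at $Ch^2\|u_j-u_{j,h}\|_{a,h}$; this keeps $\|u_j-u_{j,h}\|_{a,h}$ as a common factor throughout, so the dominance argument reduces directly to $\|u_j-u_{j,h}\|_{a,h}\gg h^2$. Your version splits the term as $\|\Pi_h u_j-u_j\|_b^2+2b(\Pi_h u_j-u_j,u_j)$ and ends with a bound $Ch^{2+\gamma}\|u_j\|_{1+\gamma}^2$ independent of $\|u_j-u_{j,h}\|_{a,h}$, which then has to be beaten by $C_0^2h^2$. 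Both reach the same conclusion; the paper's bookkeeping is marginally slicker, while yours is more explicit about the final comparison of powers of~$h$. Your remark that the cross term vanishes for \emph{any} $w_h\in V_h^{NC}$ (not just $\Pi_h u_j$) is in fact what the paper is tacitly using when it cites (\ref{ainter}).
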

\begin{proof}
The result for ECR and $EQ_1^{\rm rot}$ elements can be proved in
the uniform way. We choose $v_{j,h}={\it\Pi_h} u_j$ in Lemma
\ref{Eigenvalue_Expansion_Lemma}. For the second term in
(\ref{eigenvalue_Expansion}), from Lemma
\ref{ECR_EQ1rot_relation_Lemma}, we have
$$\|u_{j,h}-\it \Pi_h u_j\|^2_b\leq C h^{
2} \|u_j-u_{j,h}\|^2_{a,h}.$$
 For the third term in (\ref{eigenvalue_Expansion}), from (\ref{interpolation}), we have
\begin{eqnarray*}
\|v_{j,h}\|_b^2-\|u_{j}\|_b^2&=&({\it\Pi_h} u_j-u_{j},{\it\Pi_h}
u_j+u_{j})\nonumber\\
&=&\big({\it\Pi_h} u_j-u_{j},({\it\Pi_h} u_{j,h}+u_{j})
-{\it\Pi_0}({\it\Pi_h} u_{j,h}+u_{j})\big)\\
&\leq& Ch \|{\it\Pi_h} u_j-u_{j}\|_b\leq
Ch^{2}\|u_j-u_{j,h}\|_{a,h},
\end{eqnarray*}
where $\Pi_0$ denotes the piecewise constant interpolation. Together
with (\ref{ainter}) and Lemma \ref{Lower_bound_Convergence_Order_Lemma}, the first term will be the dominant term in
(\ref{expansion}) and then (\ref{ECR}) can be derived.
\end{proof}

\section{Upper bound with linear conforming element}

\subsection{Upper bound by direct use of linear conforming element}

Due to the minimum-maximal principle (\ref{min_Max_principle}), by
using conforming finite element, we can get the upper bound
naturally. Here we use the lowest order conforming element for both
triangle and rectangle partitions, respectively
\begin{itemize}
\item On the triangle partition, we use the linear element defined  as below:
\begin{eqnarray}
V_h^{C}:= &\Big\{v\in C^0({\it  \Omega}):v|_{K}\in {\rm
span}\{1,x,y\}\Big\}\cap H^1_0(\Omega).
\end{eqnarray}
\item
On the rectangle partition, the bilinear element is employed here:
\begin{eqnarray}
V_h^{C}:= &\Big\{v\in C^0({\it  \Omega}):v|_{K}\in {\rm
span}\{1,x,y,xy\}\Big\}\cap H^1_0(\Omega).
\end{eqnarray}
\end{itemize}

In the conforming finite element space $V_h^C$, we can define the
corresponding discrete eigenvalue problem:

Find ($\overline{\lambda}_h,\overline{u}_h$)$\in \mathcal{R}\times
V_h^C$ such that $b(\overline{u}_h,\overline{u}_h)=1$ and
\begin{eqnarray}\label{LConform}
a(\overline{u}_h,\overline{v}_h)=\overline{\lambda}_h(\overline{u}_h,\overline{v}_h),
\quad \forall \overline{v}_h\in V_h^{C}.
\end{eqnarray}
From \cite{BA91}, the basic estimates about approximation errors
exist
\begin{eqnarray}
0\leq \overline{\lambda}_h-\lambda&\leq& Ch^{2\gamma}\|{u}\|_{1+\gamma}^2,\label{CError_Eigenvalue}\\
\|u-\overline{u}_{h}\|_{a,h}&\leq& Ch^{\gamma}\|u\|_{1+\gamma} \ ,\label{CError_Eigenfunction_1}\\
\|u-\overline{u}_h\|_b &\leq&
Ch^{2\gamma}\|u\|_{1+\gamma}.\label{CError_Eigenfunction_2}
\end{eqnarray}

So by (\ref{CError_Eigenvalue}), (\ref{Error_Eigenvalue}) and
Theorem \ref{Lower_Bound_Theorem}, we have the following estimate
between conforming and nonconforming eigenvalue approximations:
\begin{eqnarray}
0\leq\lambda-\lambda_h &\leq&
Ch^{2\gamma}\|{u}\|_{1+\gamma}^2,\label{Lower_error}\\
0\leq\overline{\lambda}_h-\lambda &\leq&
Ch^{2\gamma}\|{u}\|_{1+\gamma}^2,\label{Upper_Error}\\
0\leq\overline{\lambda}_h-\lambda_h&\leq&
Ch^{2\gamma}\|{u}\|_{1+\gamma}^2.\label{Lower_Upper_Error}
\end{eqnarray}

\subsection{Upper bound by postprocess}
The results (\ref{Lower_error})-(\ref{Lower_Upper_Error}) are very
interesting and useful. Especially, (\ref{Lower_Upper_Error}) can
give a guaranteed error for the current numerical approximations.
Unfortunately, in order to get (\ref{Lower_Upper_Error}), we need to
solve the eigenvalue problem twice, one with nonconforming and the
other with conforming elements, which is always difficult since
solving eigenvalue problem need much more computation than solving
the corresponding source problems. So, in this subsection we give a
postprocessing method to obtain the upper bound eigenvalue
approximation without solving eigenvalue problem but only need to
solve a source problem.

After obtaining the eigenpair approximation $(\lambda_h,u_h)\in
\mathcal{R}\times V_h^{NC}$ by the nonconforming finite element, we
put them as the right hand side of an auxiliary source Laplace
problem. Then we utilize conforming finite element method to solve
 this  problem which is defined as follows:
\begin{equation}\label{CFlaplaceproblem}
 a(\widehat{u}_h,\widehat{v}_h)=\lambda_h b(u_h,\widehat{v}_h),\  \quad \forall \widehat{v}_h\in V_h^{C}
\end{equation}
After obtaining $\widehat{u}_h$, we calculate the following Rayleigh
quotient as an approximation of $\lambda$
\begin{equation}\label{Conform}
\widehat{\lambda}_h:=R(\widehat{u}_h).
\end{equation}
In order to analyze the error estimate for the eigenpair
approximation
$(\widehat{\lambda}_h,\widehat{u}_h)\in\mathcal{R}\times V_h^C$, we
define the projection operator associated with the space $V_h^C$ as
follows
\begin{eqnarray}\label{Projection_Operator}
a(\widehat{P}_hu,\widehat{v}_h)&=&a(u,\widehat{v}_h),\ \ \ \forall
\widehat{v}_h\in V_h^C.
\end{eqnarray}
For this projection operator, we have the following error estimate
\begin{eqnarray}\label{Projection_Operator_Error}
\|\widehat{P}_hu-u\|_b+h^{\gamma}\|\widehat{P}_hu-u\|_a&\leq&Ch^{2\gamma}\|u\|_{1+\gamma}.
\end{eqnarray}

 An important lemma from Babu\v{s}ka and Osborn (\cite{BA91, XuZhou}), tells that
\begin{lemma}(\cite[Lemma 4.1]{BA91})
For the self-adjoint problem (\ref{dislaplaceproblem}), suppose
($\lambda, u$) be the exact eigenpair. Then for any $w\in V$,
$\|w\|_b\neq 0$, the Rayleigh quotient $R(w)$ satisfy
\begin{equation}\label{Rayleighupper}
R(w)-\lambda=\frac{\|w-u\|_a^2}{\|w\|_b^2}-\lambda\frac{\|w-u\|_b^2}{\|w\|_b^2}.
\end{equation}
\end{lemma}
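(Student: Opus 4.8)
The plan is to reduce the identity to the weak eigenvalue equation $a(u,v)=\lambda b(u,v)$, which holds for all $v\in V$ by \eqref{weaklaplaceproblem}. First I would write the left-hand side over a common denominator: since $R(w)=a(w,w)/b(w,w)$ and $\|w\|_b^2=b(w,w)\neq 0$,
\begin{eqnarray*}
R(w)-\lambda=\frac{a(w,w)-\lambda\,b(w,w)}{\|w\|_b^2},
\end{eqnarray*}
so it suffices to show that the numerator equals $\|w-u\|_a^2-\lambda\|w-u\|_b^2$.

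Next I would set $e:=w-u$, note that $e\in V$ because both $w$ and $u$ lie in $V=H_0^1(\Omega)$, and expand using the bilinearity and symmetry of $a(\cdot,\cdot)$ and $b(\cdot,\cdot)$:
\begin{eqnarray*}
a(w,w)-\lambda b(w,w)&=&\big(a(u,u)-\lambda b(u,u)\big)+2\big(a(u,e)-\lambda b(u,e)\big)\\
&&+\,\big(a(e,e)-\lambda b(e,e)\big).
\end{eqnarray*}
The first parenthesis vanishes because $a(u,u)=\lambda b(u,u)$ (the Rayleigh quotient equals $\lambda$ at $u$, or equivalently take $v=u$ in \eqref{weaklaplaceproblem}), and the second vanishes by taking $v=e\in V$ in \eqref{weaklaplaceproblem}. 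Hence the numerator reduces to $a(e,e)-\lambda b(e,e)=\|w-u\|_a^2-\lambda\|w-u\|_b^2$, and dividing by $\|w\|_b^2$ gives \eqref{Rayleighupper}.

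There is no real obstacle here; the only point requiring care is the legitimacy of inserting the test function $e=w-u$ into the variational identity, which is immediate since $V$ is a linear space and both $w,u\in V$. I would also remark that the same computation is purely algebraic and does not use positivity of $a(\cdot,\cdot)$, so the identity holds verbatim; the usual use of it (comparing $R(w)$ with $\lambda$) then follows by controlling the two terms on the right-hand side separately.
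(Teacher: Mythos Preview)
Your argument is correct: expanding $a(w,w)-\lambda b(w,w)$ with $w=u+e$ and using the weak eigenvalue equation \eqref{weaklaplaceproblem} twice (with $v=u$ and $v=e$) to kill the first two terms is exactly the standard derivation of this identity. The paper does not supply its own proof of this lemma---it simply quotes it from \cite{BA91}---so there is nothing to compare against; your proof is the usual one and would be accepted as is.
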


\begin{theorem}\label{Upper_Bound_Theorem_1}
For the eigenpair approximation
$(\widehat{\lambda}_h,\widehat{u}_h)\in\mathcal{R}\times
\widehat{V}_h^C$, the following error estimates hold
\begin{eqnarray}
\|u-\widehat{u}_h\|_a&\leq&Ch^{\gamma}\|u\|_{1+\gamma},\label{Error_conforming_eigenfunction}\\
|\widehat{\lambda}_h-\lambda|&\leq&Ch^{2\gamma}.\label{Error_conforming_eigenvalue}
\end{eqnarray}
Assume $u\in H^{1+\gamma}(\Omega)$ with $0 <\gamma\leq 1$.
Then when $h$ is small enough, we have the following upper bound for
$\widehat{\lambda}_h$ defined by (\ref{upper}),
\begin{eqnarray}\label{upper_eigenvalue}
\widehat{\lambda}_h& \geq &\lambda.
\end{eqnarray}
\end{theorem}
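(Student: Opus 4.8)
The plan is to establish the two error estimates first, then leverage the eigenvalue expansion in Lemma~\ref{Eigenvalue_Expansion_Lemma} (or equivalently the Rayleigh quotient identity \eqref{Rayleighupper}) to extract the sign of $\widehat{\lambda}_h-\lambda$. For \eqref{Error_conforming_eigenfunction}, the idea is to compare $\widehat{u}_h$ with the projection $\widehat{P}_h u$. Since $a(\widehat{P}_h u,\widehat{v}_h)=a(u,\widehat{v}_h)=\lambda b(u,\widehat{v}_h)$ while $a(\widehat{u}_h,\widehat{v}_h)=\lambda_h b(u_h,\widehat{v}_h)$, subtracting gives $a(\widehat{P}_h u-\widehat{u}_h,\widehat{v}_h)=\lambda b(u,\widehat{v}_h)-\lambda_h b(u_h,\widehat{v}_h)$ for all $\widehat{v}_h\in V_h^C$. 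The right-hand side is controlled by $|\lambda-\lambda_h|\,\|u\|_b\|\widehat{v}_h\|_b + \lambda_h\|u-u_h\|_b\|\widehat{v}_h\|_b$, and using \eqref{Error_Eigenvalue}, \eqref{Error_Eigenfunction_2} together with a Poincar\'e-type bound $\|\widehat{v}_h\|_b\le C\|\widehat{v}_h\|_a$, one gets $\|\widehat{P}_h u-\widehat{u}_h\|_a\le Ch^{2\gamma}\|u\|_{1+\gamma}$. Combining with \eqref{Projection_Operator_Error} via the triangle inequality yields \eqref{Error_conforming_eigenfunction}, and then $\|u-\widehat{u}_h\|_b\le Ch^{2\gamma}$ follows by a duality (Aubin--Nitsche) argument applied to the same difference. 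Finally \eqref{Error_conforming_eigenvalue} is immediate from the Rayleigh quotient identity \eqref{Rayleighupper} with $w=\widehat{u}_h$: the first term is $O(h^{2\gamma})$ and the second is $O(h^{4\gamma})$, both after normalization by $\|\widehat{u}_h\|_b^2$, which is bounded below since $\|\widehat{u}_h\|_b\to\|u\|_b=1$.

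The heart of the theorem is the lower bound \eqref{upper_eigenvalue}. Here I would apply \eqref{Rayleighupper} with $w=\widehat{u}_h$ to write
\begin{eqnarray*}
\widehat{\lambda}_h-\lambda=\frac{\|\widehat{u}_h-u\|_a^2}{\|\widehat{u}_h\|_b^2}-\lambda\frac{\|\widehat{u}_h-u\|_b^2}{\|\widehat{u}_h\|_b^2},
\end{eqnarray*}
so it suffices to show $\|\widehat{u}_h-u\|_a^2\ge \lambda\|\widehat{u}_h-u\|_b^2$ for $h$ small. The strategy is to prove that $\|\widehat{u}_h-u\|_a$ is bounded \emph{below} by $Ch$ (a lower bound on the convergence rate analogous to Lemma~\ref{Lower_bound_Convergence_Order_Lemma}), while $\|\widehat{u}_h-u\|_b=O(h^{2\gamma})$; since $2\gamma>1$ fails only when $\gamma\le 1/2$, this comparison is delicate and must be done more carefully. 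The cleaner route is to split $\widehat{u}_h-u = (\widehat{u}_h-\widehat{P}_h u)+(\widehat{P}_h u-u)$ and note that the $b$-norm of each piece is $O(h^{2\gamma})$, so $\|\widehat{u}_h-u\|_b^2 = O(h^{4\gamma})$, which is dominated by the $a$-norm term provided $\|\widehat{u}_h-u\|_a\ge Ch^{2\gamma}$. To see the latter does not degenerate below $h^{2\gamma}$ in a way that kills the argument, observe $a(\widehat{u}_h-u,\widehat{v}_h)=\lambda_h b(u_h,\widehat{v}_h)-\lambda b(u,\widehat{v}_h)$ combined with $a(\widehat{u}_h-u,\widehat{u}_h-u)=a(\widehat{u}_h,\widehat{u}_h)-2a(u,\widehat{u}_h)+a(u,u)$; expanding $a(\widehat{u}_h,\widehat{u}_h)=\widehat{\lambda}_h\|\widehat{u}_h\|_b^2$ and $a(u,\widehat{u}_h)=\lambda b(u,\widehat{u}_h)$ turns the desired inequality $\|\widehat{u}_h-u\|_a^2-\lambda\|\widehat{u}_h-u\|_b^2\ge 0$ into $(\widehat{\lambda}_h-\lambda)\|\widehat{u}_h\|_b^2\ge 0$, which is tautological — the real content is therefore just verifying $\|\widehat{u}_h\|_b\ne 0$, i.e. that $\widehat{u}_h$ is not identically zero for $h$ small, which follows from $\|\widehat{u}_h\|_b\to 1$.

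The main obstacle I anticipate is precisely keeping the logic of the last step non-circular: the Rayleigh identity already encodes $\widehat{\lambda}_h\ge\lambda$ once one knows $\|\widehat{u}_h-u\|_a^2\ge\lambda\|\widehat{u}_h-u\|_b^2$, but that inequality is equivalent to what we want to prove, so the genuine argument must instead directly compare the orders of the two terms. Thus the careful version is: establish $\|\widehat{u}_h-u\|_b\le Ch^{2\gamma}$ (done above) and separately establish a \emph{lower} bound $\|\widehat{u}_h-u\|_a\ge c\,h^{\min(2\gamma,1)}$ — the latter via the identity $a(\widehat{u}_h-u,\widehat{v}_h)=(\lambda_h-\lambda)b(u_h,\widehat{v}_h)+\lambda b(u_h-u,\widehat{v}_h)$ and the fact that $\widehat{P}_h u - u$ already supplies a lower bound on the distance from $u$ to $V_h^C$ when $u\notin V_h^C$, as in Lemma~\ref{Lower_bound_Convergence_Order_Lemma}. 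Once both bounds are in hand, for $h$ small enough $\|\widehat{u}_h-u\|_a^2$ of order $h^{2\min(2\gamma,1)}$ dominates $\lambda\|\widehat{u}_h-u\|_b^2$ of order $h^{4\gamma}$ whenever $\min(2\gamma,1)\le 2\gamma$, which always holds; feeding this into \eqref{Rayleighupper} gives $\widehat{\lambda}_h-\lambda\ge 0$. I would also double-check the regime $\gamma$ small, where $2\min(2\gamma,1)=4\gamma$ and the two terms are of the same order, so a sharper constant comparison (using that the $a$-lower-bound constant beats $\lambda$ times the $b$-upper-bound constant for $h$ small) is needed — this is the one place the ``$h$ small enough'' hypothesis is genuinely used.
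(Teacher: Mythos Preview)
Your treatment of the error estimates \eqref{Error_conforming_eigenfunction} and \eqref{Error_conforming_eigenvalue} matches the paper's: compare $\widehat{u}_h$ with $\widehat{P}_h u$ via the two source equations to get $\|\widehat{u}_h-\widehat{P}_h u\|_a\le Ch^{2\gamma}$, then apply the triangle inequality together with \eqref{Projection_Operator_Error}. (The paper obtains the $b$-norm bound by the simpler route $\|\widehat{u}_h-u\|_b\le \|\widehat{u}_h-\widehat{P}_h u\|_a+\|\widehat{P}_h u-u\|_b$ rather than an Aubin--Nitsche argument, but either works.) The use of the Rayleigh identity \eqref{Rayleighupper} with $w=\widehat{u}_h$ to deduce \eqref{Error_conforming_eigenvalue} is also identical.

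For the upper bound \eqref{upper_eigenvalue}, however, the paper's argument is much more direct than yours. Having written
\[
\widehat{\lambda}_h-\lambda=\frac{\|\widehat{u}_h-u\|_a^2}{\|\widehat{u}_h\|_b^2}-\lambda\frac{\|\widehat{u}_h-u\|_b^2}{\|\widehat{u}_h\|_b^2},
\]
the paper simply invokes Lemma~\ref{Lower_bound_Convergence_Order_Lemma} (which covers linear and bilinear elements, and $\widehat{u}_h\in V_h^C$) to assert $\|u-\widehat{u}_h\|_a\ge Ch$, and then declares the first term dominant over the second, which is $O(h^{4\gamma})$. Your detour through the reformulation $(\widehat{\lambda}_h-\lambda)\|\widehat{u}_h\|_b^2\ge 0$ is, as you yourself note, circular and should be discarded outright. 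The remaining issue is that the lower bound you then propose, $\|\widehat{u}_h-u\|_a\ge c\,h^{\min(2\gamma,1)}$, is not what Lemma~\ref{Lower_bound_Convergence_Order_Lemma} delivers: the lemma gives exponent~$1$, independent of~$\gamma$. The $2\gamma$ in your expression appears to have leaked in from the \emph{upper} bound on $\|\widehat{u}_h-\widehat{P}_h u\|_a$, which plays no role in bounding the distance from $u$ to $V_h^C$ from below. With the correct lower bound $Ch$ in hand the dominance comparison is a single line, and your subsequent concern about a ``sharper constant comparison'' when the two orders coincide does not arise for $\gamma>1/2$; the paper's proof is likewise silent on the regime $\gamma\le 1/2$.
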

\begin{proof}
First, we have the following estimate
\begin{eqnarray*}
\|\widehat{u}_h-\widehat{P}_hu\|_a^2
&=&a(\widehat{u}_h-\widehat{P}_hu,\widehat{u}_h-\widehat{P}_hu)=b(\lambda_hu_h-\lambda
u,\widehat{u}_h-\widehat{P}_hu)\\
&\leq&\|\lambda_hu_h-\lambda u\|_{0,h}\|\widehat{u}_h-\widehat{P}_hu\|_a\nonumber\\
&\leq&\big(|\lambda_h|\|u-u_h\|_b+|\lambda_h-\lambda|\|u\|_b\big)\|\widehat{u}_h-\widehat{P}_hu\|_a\\
&\leq&Ch^{2\gamma}\|\widehat{u}_h-\widehat{P}_hu\|_a\|u\|_{1+\gamma}.
\end{eqnarray*}
So the following estimate holds
\begin{eqnarray}\label{Super_u_h_P_h_u}
\|\widehat{u}_h-\widehat{P}_hu\|_a&\leq&Ch^{2\gamma}\|u\|_{1+\gamma}.
\end{eqnarray}
Then we have the following error estimates for $\widehat{u}_h$
\begin{eqnarray*}
\|\widehat{u}_h-u\|_a&\leq&\|\widehat{u}_h-\widehat{P}_hu\|_a+\|\widehat{P}_hu-u\|_a\nonumber\\
&\leq&Ch^{\gamma}\|u\|_{1+\gamma},\\
\|\widehat{u}_h-u\|_b&\leq&\|\widehat{u}_h-\widehat{P}_hu\|_a+\|\widehat{P}_hu-u\|_b\nonumber\\
&\leq&Ch^{2\gamma}\|u\|_{1+\gamma}.
\end{eqnarray*}
Since $V_h^{C}\subset V$, replacing $w$ with $\widehat{u}_h$ in
(\ref{Rayleighupper}), we have
\begin{equation}\label{upper}
\widehat{\lambda}_h-\lambda=\frac{\|\widehat{u}_h-u\|_a^2}{\|\widehat{u}_h\|_b^2}
-\lambda\frac{\|\widehat{u}_h-u\|_b^2}{\|\widehat{u}_h\|_b^2}.
\end{equation}
Applying the estimates of $\widehat{u}_h$ to (\ref{upper}), we can
obtain (\ref{Error_conforming_eigenvalue}). Furthermore from Lemma \ref{Lower_bound_Convergence_Order_Lemma},
 we have $\|u-\widehat{u}_h\|_{a}\geq Ch$. Thus we can see that the first term is dominate and the
desired result (\ref{upper_eigenvalue}) is derived.
\end{proof}

\begin{theorem}\label{Two_Side_Theorem_1}
Under the conditions in Theorem \ref{Lower_Bound_Theorem}  and
\ref{Upper_Bound_Theorem_1}, for the eigenvalue approximation
$\lambda_h$ and $\widehat{\lambda}_h$,  we have $0\leq
\widehat{\lambda}_h-\lambda_h\leq Ch^{2\gamma}$ and
$\max\{|\lambda-\lambda_h|, |\widehat{\lambda}_h-\lambda|\}\leq
\widehat{\lambda}_h-\lambda_h\leq Ch^{2\gamma}$.
\end{theorem}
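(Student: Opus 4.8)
The plan is to chain together the one-sided bounds that have already been established, so this is essentially a bookkeeping argument on top of Theorems~\ref{Lower_Bound_Theorem} and \ref{Upper_Bound_Theorem_1}. First I would fix $h$ to be small enough that the sign conclusions of \emph{both} of those theorems hold simultaneously, and collapse the finitely many constants (and the fixed factors $\|u\|_{1+\gamma}^2$) into a single $C$. Then Theorem~\ref{Lower_Bound_Theorem} gives $0\le\lambda-\lambda_h\le Ch^{2\gamma}$, so in particular $\lambda_h\le\lambda$, while Theorem~\ref{Upper_Bound_Theorem_1} gives $\widehat\lambda_h\ge\lambda$ together with $0\le\widehat\lambda_h-\lambda\le Ch^{2\gamma}$. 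In other words we have the sandwich $\lambda_h\le\lambda\le\widehat\lambda_h$.

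From the sandwich the first assertion is immediate. Writing $\widehat\lambda_h-\lambda_h=(\widehat\lambda_h-\lambda)+(\lambda-\lambda_h)$, both summands are nonnegative, so $\widehat\lambda_h-\lambda_h\ge0$; and the same decomposition combined with the two $O(h^{2\gamma})$ estimates above yields $\widehat\lambda_h-\lambda_h\le Ch^{2\gamma}$.

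For the second assertion I would exploit that each of the two nonnegative summands in that decomposition is bounded above by the whole. Since $\widehat\lambda_h-\lambda\ge0$, we get $|\lambda-\lambda_h|=\lambda-\lambda_h\le(\widehat\lambda_h-\lambda)+(\lambda-\lambda_h)=\widehat\lambda_h-\lambda_h$; since $\lambda-\lambda_h\ge0$, we get $|\widehat\lambda_h-\lambda|=\widehat\lambda_h-\lambda\le\widehat\lambda_h-\lambda_h$. Taking the maximum of the two and then invoking the upper bound from the first assertion gives $\max\{|\lambda-\lambda_h|,\ |\widehat\lambda_h-\lambda|\}\le\widehat\lambda_h-\lambda_h\le Ch^{2\gamma}$.

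I do not expect any real obstacle here: all the analytic content lives in Theorems~\ref{Lower_Bound_Theorem} and \ref{Upper_Bound_Theorem_1}, and what remains is the elementary observation that a quantity sandwiched between a certified lower and a certified upper bound controls the true error. The only points to be mildly careful about are that ``$h$ small enough'' must mean small enough for both earlier theorems at once, and that $\widehat\lambda_h-\lambda_h$ is genuinely computable from $\lambda_h$ (the nonconforming eigensolve) and $\widehat\lambda_h$ (the auxiliary source solve plus a Rayleigh quotient) — which is exactly what makes the a posteriori estimate $\max\{|\lambda-\lambda_h|,|\widehat\lambda_h-\lambda|\}\le\widehat\lambda_h-\lambda_h$ practically useful.
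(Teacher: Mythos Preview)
Your argument is correct and is exactly what the paper does: it simply says the result follows from Theorems~\ref{Lower_Bound_Theorem} and \ref{Upper_Bound_Theorem_1} (together with the basic estimate~\eqref{Error_Eigenvalue}), leaving the elementary sandwich/decomposition step that you wrote out implicit.
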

\begin{proof}
Based on the results in Theorems \ref{Lower_Bound_Theorem} and
\ref{Upper_Bound_Theorem_1} and the error estimate
(\ref{Error_Eigenvalue}), we can easily prove this theorem.
\end{proof}

\section{Better upper bound approximation with finer finite element space}
In the last section, the postprocessing method is applied to get the
upper bound of the eigenvalue which has the same convergence order
as the lower bound. But as we know from \cite{XuZhou}, we can employ
the postprocessing method to obtain a new eigenpair approximation
with better accuracy than the obtained nonconforming approximation
$(\lambda_h,u_h)$. But because of the higher order convergence, the
analysis of upper bound in last section can not be used in this
case. Here, we propose a method to produce not only
higher order accuracy but also upper bound approximation of the
eigenvalue. This procedure contains solving some auxiliary source
problems and a very small eigenvalue problem.

The aim of this section is to obtain the approximation of the first
$m$ eigenvalues $\lambda_1\leq\lambda_2\leq\cdots\leq \lambda_m$.
Assume we have obtained the first eigenvalue approximations
$\lambda_{1,h}\leq \lambda_{2,h}\cdots\leq \lambda_{m,h}$ and the
corresponding eigenfunction approximations $u_{1,h}, u_{2,h},\cdots,
u_{m,h}$ by the nonconforming finite element method with the lower
bound property $\lambda_{j,h}\leq \lambda_j\ (j=1,2,\cdots,m)$.

Before introducing the postprocessing method, we need to define a
finer conforming finite element space $V_h^{HC}$ which can be
constructed by using higher order finite element or refining current
mesh $\mathcal{T}_h$ such that this space has the higher convergence
order (\cite{XuZhou})
\begin{eqnarray}\label{Higer_Order_Convergence}
\inf_{\widehat{v}_h\in
\widehat{V}_h^{HC}}\|u-\widehat{v}_h\|_a&\leq&Ch^{2\gamma},\ \ \
\forall u\in H^{1+\gamma}(\Omega).
\end{eqnarray}

Now let us introduce a postprocessing method to get the upper bound
approximations with the help of obtained approximations
$(\lambda_{1,h},u_{1,h}),\cdots,(\lambda_{m,h},u_{m,h})$.

\begin{algorithm}\label{Higher_Order_Upper_Algorithm}
Higher order postprocessing method:

\begin{enumerate}
\item For $j=1,2,\cdots, m$

Find $\widehat{u}_{j,h}\in V_h^{HC}$ such that
\begin{eqnarray}\label{auxilary_equations}
a(\widehat{u}_{j,h},\widehat{v}_h) &=&
\lambda_{j,h}b(u_{j,h},\widehat{v}_h),\ \ \ \ \forall
\widehat{v}_h\in V_h^{HC}.
\end{eqnarray}
\item Construct the finite dimensional space $\widehat{V}_h = {\rm span}\{\widehat{u}_{1,h},\cdots,\widehat{u}_{m,h}\}$
and solve the following eigenvalue problem in the space
$\widehat{V}_h$:

Find $(\widetilde{\lambda}_h,\widetilde{u}_h)\in \mathcal{R}\times
\widehat{V}_h$ such that
\begin{eqnarray}\label{Aux_Eigenvalue_Problem}
a(\widetilde{u}_h,\widetilde{v}_h)&=&\widetilde{\lambda}_hb(\widetilde{u}_h,\widetilde{v}_h),\
\ \ \ \forall \widetilde{v}\in \widehat{V}_h.
\end{eqnarray}
Finally, we obtain the new eigenpair approximations
$(\widetilde{\lambda}_{1,h},\widetilde{u}_{1,h}),\cdots,(\widetilde{\lambda}_{m,h},\widetilde{u}_{m,h})$.
\end{enumerate}
\end{algorithm}
Similarly to the last section, in order to analyze the error
estimate for the eigenfunction approximations
$\widehat{u}_{1,h},\cdots,\widehat{u}_{m,h}$, we define the
projection operator corresponding to the space $V_h^{HC}$ as follows
\begin{eqnarray}\label{Projection_Operator_high}
a(P_h^{HC}u,\widehat{v}_h)&=&a(u,\widehat{v}_h),\ \ \ \forall
\widehat{v}_h\in V_h^{HC}.
\end{eqnarray}
For this projection operator, we have the following error estimate
\begin{eqnarray}\label{Projection_Operator_Error_high}
\|P_h^{HC}u-u\|_a &\leq&
C\inf_{\widehat{v}_h\in\widehat{V}_h^{HC}}\|u-\widehat{v}_h\|_a \leq
Ch^{2\gamma}.
\end{eqnarray}

\begin{theorem}\label{Higher_Order_Upper}
For the eigenpair approximations
$(\widetilde{\lambda}_{1,h},\widetilde{u}_{1,h}),\cdots,
(\widetilde{\lambda}_{m,h},\widetilde{u}_{m,h})$ obtained by
Algorithm \ref{Higher_Order_Upper_Algorithm}, we have following
estimates
\begin{eqnarray}
0\leq \widetilde{\lambda}_{j,h}-\lambda_{j}&\leq& Ch^{4\gamma},\label{Higher_order_Upper_Eigenvalue}\\
\|\widetilde{u}_{j,h}-u\|_a &\leq & Ch^{2\gamma}.
\label{Higher_Order_Eigenfunction}
\end{eqnarray}
\end{theorem}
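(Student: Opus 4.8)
The plan is to derive the two bounds from three ingredients. The \emph{lower} bound $\wt\lam_{j,h}\ge\lam_j$ in (\ref{Higher_order_Upper_Eigenvalue}) comes for free from the min--max principle (\ref{min_Max_principle}): the Rayleigh--Ritz space $\wh V_h$ is a subspace of $V_h^{HC}\subset V=H_0^1(\Om)$, hence its $j$-th Ritz value is $\ge\lam_j$. Everything else rests on an $H^1$-type error estimate $\|\wh u_{j,h}-u_j\|_a\le Ch^{2\ga}$ for the functions produced in the first loop of Algorithm \ref{Higher_Order_Upper_Algorithm}, on the transfer of this estimate to the Ritz pair $(\wt\lam_{j,h},\wt u_{j,h})$, and on one application of the Rayleigh-quotient identity (\ref{Rayleighupper}) to turn the $H^1$ eigenfunction error into an $h^{4\ga}$ eigenvalue error.

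First I would estimate $\wh u_{j,h}$. Subtracting the defining relation (\ref{Projection_Operator_high}) of $P_h^{HC}u_j$ from (\ref{auxilary_equations}), and using that $u_j$ is the exact eigenfunction, gives
\[
a(\wh u_{j,h}-P_h^{HC}u_j,\wh v_h)=b(\lam_{j,h}u_{j,h}-\lam_j u_j,\wh v_h),\qquad\forall\,\wh v_h\in V_h^{HC}.
\]
Testing with $\wh v_h=\wh u_{j,h}-P_h^{HC}u_j\in V_h^{HC}$, then Cauchy--Schwarz and the Friedrichs inequality $\|\cdot\|_b\le C\|\cdot\|_a$ on $H_0^1(\Om)$, give $\|\wh u_{j,h}-P_h^{HC}u_j\|_a\le C\|\lam_{j,h}u_{j,h}-\lam_j u_j\|_b$, while
\[
\|\lam_{j,h}u_{j,h}-\lam_j u_j\|_b\le\lam_{j,h}\|u_{j,h}-u_j\|_b+|\lam_{j,h}-\lam_j|\,\|u_j\|_b\le Ch^{2\ga}
\]
by (\ref{Error_Eigenvalue}) and (\ref{Error_Eigenfunction_2}). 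Together with the projection estimate (\ref{Projection_Operator_Error_high}) this yields $\|\wh u_{j,h}-u_j\|_a\le Ch^{2\ga}$; structurally this is the argument of Theorem \ref{Upper_Bound_Theorem_1} with $V_h^{HC}$ replacing $V_h^C$.

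Next I would transfer this to the small eigenvalue problem (\ref{Aux_Eigenvalue_Problem}). By the previous bound and $\|\cdot\|_b\le C\|\cdot\|_a$, the mass matrix $\big(b(\wh u_{i,h},\wh u_{k,h})\big)_{i,k=1}^m$ equals $I$ up to an $O(h^{2\ga})$ perturbation; hence for $h$ small the $\wh u_{i,h}$ are linearly independent, and every $b$-normalized element of $\wh V_h$ lies within $Ch^{2\ga}$ in the $\|\cdot\|_a$-norm of $E_m:={\rm span}\{u_1,\dots,u_m\}$. Since the continuous problem restricted to the invariant space $E_m$ has eigenvalues exactly $\lam_1,\dots,\lam_m$, standard spectral perturbation theory for the Rayleigh--Ritz projection onto $\wh V_h$ (cf. \cite{BA91,XuZhou}) then gives, with $\wt u_{j,h}$ $b$-normalized and signed appropriately,
\[
\|\wt u_{j,h}-u_j\|_a\le Ch^{2\ga},
\]
which is (\ref{Higher_Order_Eigenfunction}). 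Finally, because $\wt u_{j,h}\in\wh V_h\subset V$ and $R(\wt u_{j,h})=\wt\lam_{j,h}$, the identity (\ref{Rayleighupper}) with $w=\wt u_{j,h}$ and $u=u_j$ reads
\[
0\le\wt\lam_{j,h}-\lam_j=\|\wt u_{j,h}-u_j\|_a^2-\lam_j\|\wt u_{j,h}-u_j\|_b^2\le\|\wt u_{j,h}-u_j\|_a^2\le Ch^{4\ga},
\]
which, combined with the min--max lower bound, is (\ref{Higher_order_Upper_Eigenvalue}).

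The delicate step is the transfer. Although $\wh u_{j,h}$ itself is within $Ch^{2\ga}$ of $u_j$, the Ritz eigenfunction $\wt u_{j,h}$ is the linear combination of \emph{all} the $\wh u_{i,h}$ determined by the coupled $m\times m$ generalized eigenvalue problem (\ref{Aux_Eigenvalue_Problem}), so one must control how that problem mixes the $\wh u_{i,h}$. When $\lam_j$ is a simple eigenvalue of $\{\lam_1,\dots,\lam_m\}$ this is symmetric-pencil eigenvector perturbation with the constant depending on the spectral gap; when $\lam_j$ is repeated, $u_j$ in (\ref{Higher_Order_Eigenfunction}) has to be read as a suitably chosen eigenfunction and the estimate is carried out at the level of the corresponding eigenspace (cluster), exactly as in the classical Rayleigh--Ritz subspace theory. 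The remaining steps are the routine estimates above.
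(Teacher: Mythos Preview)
Your proof is correct and follows the same overall architecture as the paper: min--max for the lower bound, the estimate $\|\wh u_{j,h}-u_j\|_a\le Ch^{2\ga}$ via comparison with $P_h^{HC}u_j$, transfer of this bound to $\wt u_{j,h}$, and finally the Rayleigh-quotient identity (\ref{Rayleighupper}) to square the eigenfunction error into an $h^{4\ga}$ eigenvalue error.

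The one place where you and the paper differ is the transfer step. The paper dispatches it in one line by invoking the classical Babu\v{s}ka--Osborn best-approximation estimate for eigenfunctions \cite{BA91},
\[
\|\wt u_{j,h}-u_j\|_a\;\le\;C\inf_{\wt v_h\in\wh V_h}\|u_j-\wt v_h\|_a\;\le\;C\|u_j-\wh u_{j,h}\|_a\;\le\;Ch^{2\ga},
\]
whereas you argue directly via perturbation of the $m\times m$ mass and stiffness matrices and then cite the same references. Your route is more explicit about what can go wrong (linear independence, the simple-versus-cluster distinction, the dependence of $C$ on the gap), which is pedagogically useful; the paper's route is shorter because the Babu\v{s}ka--Osborn inequality already packages all of that. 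Either way the argument is sound.
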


\begin{proof}
First, we prove the error estimate of the eigenfunction
approximations (\ref{Higher_Order_Eigenfunction}).  From
(\ref{weaklaplaceproblem}), (\ref{Projection_Operator_high}) and the
coercivity of $a(\cdot,\cdot)$, we have
\begin{eqnarray*}
\|\widehat{u}_{j,h}-P_h^{HC}u_j\|_a^2
&=&a(\widehat{u}_{j,h}-P_h^{HC}u_j,\widehat{u}_{j,h}-P_h^{HC}u_j)
=b(\lambda_{j,h}u_{j,h}-\lambda_j
u_j,\widehat{u}_{j,h}-P_h^{HC}u_j)\\
&\leq&\|\lambda_{j,h}u_{j,h}-\lambda_j u_j\|_{0,h}\|\widehat{u}_{j,h}-P_h^{HC}u_j\|_a\nonumber\\
&\leq&\big(|\lambda_{j,h}|\|u_j-u_{j,h}\|_b+|\lambda_{j,h}-\lambda_j|\|u_j\|_b\big)\|\widehat{u}_{j,h}
-P_h^{HC}u_j\|_a\\
&\leq&Ch^{2\gamma}\|\widehat{u}_{j,h}-P_h^{HC}u_j\|_a.
\end{eqnarray*}
Combined with (\ref{Projection_Operator_Error_high}), the following
estimate holds
\begin{eqnarray}
\|\widehat{u}_{j,h}-u_j\|_a &\leq&
\|\widehat{u}_{j,h}-P_h^{HC}u_j\|_a+\|P_h^{HC}u-u\|_a\leq
Ch^{2\gamma}.
\end{eqnarray}
Based on the theory in \cite{BA91} for the error estimate of the
eigenvalue problem by finite element method, we have the following
inequality
\begin{eqnarray}
\|\widetilde{u}_{j,h}-u_j\|_a&\leq&C\inf_{\widetilde{v}_h\in\widehat{V}_h}\|u_j-\widetilde{v}_h\|_a
\leq C\|u_j-\widehat{u}_{j,h}\|_a\leq Ch^{2\gamma}.
\end{eqnarray}
This is the desired result (\ref{Higher_Order_Eigenfunction}). Now,
we come to prove the error estimate property
(\ref{Higher_order_Upper_Eigenvalue}). With the help of
(\ref{Rayleighupper}), we have
\begin{eqnarray}
|\widetilde{\lambda}_{j,h}-\lambda_j|&\leq&C\|\widetilde{u}_{j,h}-u_j\|_a^2\leq
Ch^{4\gamma}.
\end{eqnarray}
Thanks to the minimum-maximum principle (\ref{min_Max_principle}),
we have
\begin{eqnarray}\label{min_max_high_order_1}
\lambda_j=\min\limits_{V_j\subset V,\dim V_j=j}\max\limits_{v\in
V_j}R(v),
\end{eqnarray}
and the discrete version in the space $\widehat{V}_h$
\begin{eqnarray}\label{min_max_high_order_2}
\widetilde{\lambda}_{j,h}=\min_{V_{j,h}\subset \widehat{V}_h,\dim
V_{j,h}=j}\max\limits_{v\in V_{j,h}}R(v).
\end{eqnarray}
Based on (\ref{min_max_high_order_1}) and
(\ref{min_max_high_order_2}), we can easily obtain  $\lambda_j\leq
\widetilde{\lambda}_{j,h}\ (j=1,2,\cdots,m)$ and complete the proof.
\end{proof}

\section{Numerical Results}
In this section, two numerical examples are presented to validate
our theoretical results stated in the above sections.

\subsection{Eigenvalue problem on the unit square domain}
In this subsection, we solve the eigenvalue problem
(\ref{laplaceproblem}) on the unit domain $\Omega=(0,1)\times
(0,1)$. The aim here is to find the approximations of the first $6$
eigenvalues $\lambda_1, \lambda_2,\cdots, \lambda_6$.

First, ECR element is applied to solve the eigenvalue problem and
then the linear finite element to do the postprocessing on the
series of meshes which are produced by Delaunay scheme. The
quadratic element is applied to implement Algorithm
\ref{Higher_Order_Upper_Algorithm}. Table \ref{ECR_Square} shows the
eigenvalue approximations of the first $6$ eigenvalues. And the
approximations by postprocessing method with
 linear element is presented in Table \ref{Linear_Square}. Table
 \ref{Quadratic_Square} shows the numerical results of the
 postprocessing Algorithm \ref{Higher_Order_Upper_Algorithm} with quadratic element.
 From Table \ref{ECR_Square}, we can find the numerical approximations of ECR element are
 lower bounds of the exact eigenvalues. Tables \ref{Linear_Square} and
 \ref{Quadratic_Square} show the upper bounds of the numerical
 approximations by the postprocessing method using linear and quadratic
 elements.

\begin{table}[htb!]
\centering \caption{ECR element for eigenvalue problem on unit
square}\label{ECR_Square}
\begin{tabular}{|c|c|c|c|c|c|c|}\hline
$h$&$\lambda_{1,h}$&$\lambda_{2,h}$&$\lambda_{3,h}$&$\lambda_{4,h}$ &$\lambda_{5,h}$ &$\lambda_{6,h}$ \\
\hline $0.2$&     19.282902 & 46.704835 &  46.785588 & 72.047988  &   88.938276 & 89.192096   \\
\hline $0.1$&     19.609902 &  48.619503 & 48.628462 & 77.120773  &   95.649531 & 95.991202 \\
\hline $0.05$&    19.711840 &  49.180465 & 49.181471 & 78.517381  &   98.019716 & 98.037715 \\
\hline $0.025$&   19.732395 &  49.306198 & 49.306523 & 78.849396  &   98.532500 & 98.532943 \\
\hline $0.0125$&  19.737526 &  49.337705 & 49.337756 & 78.930339  &   98.655173 & 98.655315 \\
\hline Trend & $\nearrow $ & $\nearrow $   & $\nearrow$ & $\nearrow$ &  $\nearrow$& $\nearrow$\\
\hline
\end{tabular}
\end{table}

\begin{table}[htb!]
\centering \caption{Linear element for postprocessing method on unit
square}\label{Linear_Square}
\begin{tabular}{|c|c|c|c|c|c|c|}\hline
$h$&$\lambda_{1,h}$&$\lambda_{2,h}$&$\lambda_{3,h}$&$\lambda_{4,h}$ &$\lambda_{5,h}$ &$\lambda_{6,h}$ \\
\hline $0.2$&      20.458912 &  53.558232 &  53.776851&  90.030803 &    117.15806 &  116.15773   \\
\hline $0.1$&      19.940425 &  50.568044 &  50.561690&  82.204934 &    103.84878 &  103.61419 \\
\hline $0.05$&     19.784700 &  49.634441 &  49.633984&  79.701583 &    99.844154 &  99.842216 \\
\hline $0.025$&    19.750665 &  49.419822 &  49.419553&  79.140201 &    98.978970 &  98.978102 \\
\hline $0.0125$&   19.742054 &  49.365577 &  49.365546&  79.002394 &    98.765631 &  98.765878 \\
\hline Trend & $\searrow $ & $\searrow $   & $\searrow$ & $\searrow$ &  $\searrow$& $\searrow$\\
\hline
\end{tabular}
\end{table}
\begin{table}[htb!]
\centering \caption{Quadratic element for postprocessing method on
unit square}\label{Quadratic_Square}
\begin{tabular}{|c|c|c|c|c|c|c|}\hline
$h$&$\lambda_{1,h}$&$\lambda_{2,h}$&$\lambda_{3,h}$&$\lambda_{4,h}$ &$\lambda_{5,h}$ &$\lambda_{6,h}$ \\
\hline $0.2$&       19.744698 &  49.427336 &   49.434146&   79.315293 &    99.263888 &   99.326679   \\
\hline $0.1$&       19.739654 &  49.354899 &   49.354991&   78.982884 &    98.747751 &   98.748129 \\
\hline $0.05$&      19.739231 &  49.348360 &   49.348371&   78.958190 &    98.698705 &   98.698827 \\
\hline $0.025$&     19.739210 &  49.348043 &   49.348043&   78.956922 &    98.696216 &   98.696219 \\
\hline $0.0125$&    19.739209 &  49.348023 &   49.348023&   78.956841 &    98.696054 &   98.696054 \\
\hline Trend & $\searrow $ & $\searrow $   & $\searrow$ & $\searrow$ &  $\searrow$& $\searrow$\\
\hline
\end{tabular}
\end{table}

Then, $EQ_1^{\rm rot}$ element is applied to solve the eigenvalue
problem and then the bilinear finite element to do the
postprocessing on the series of uniform rectangle meshes.
Biquadratic element is employed to implement Algorithm
\ref{Higher_Order_Upper_Algorithm}.  Table \ref{EQ1Rot_Square} shows
the eigenvalue approximations of the first $6$ eigenvalues and the
approximations by postprocessing method with
 bilinear element is presented in Table \ref{Bilinear_Square}. Table
 \ref{Biquadratic_Square} shows the numerical results of the
 postprocessing Algorithm \ref{Higher_Order_Upper_Algorithm} with biquadratic element.
 From Table \ref{EQ1Rot_Square}, we can find the numerical approximations of $EQ_1^{\rm rot}$ element are
 lower bounds of the exact eigenvalues. Tables \ref{Bilinear_Square} and
 \ref{Biquadratic_Square} show the upper bounds of the numerical
 approximations by the postprocessing method using bilinear and biquadratic
 elements.
\begin{table}[htb!]
\centering \caption{$EQ_1^{\rm rot}$ element for eigenvalue problem on unit
square}\label{EQ1Rot_Square}
\begin{tabular}{|c|c|c|c|c|c|c|}\hline
$h$&$\lambda_{1,h}$&$\lambda_{2,h}$&$\lambda_{3,h}$&$\lambda_{4,h}$ &$\lambda_{5,h}$ &$\lambda_{6,h}$ \\
\hline $1/8\times 1/8$&      19.530807 &  47.509983 & 47.523306 &  75.754849  &    89.869065 & 89.943840   \\
\hline $1/16\times 1/16$&      19.686551 &  48.878512 & 48.879400 &  78.123216  &    96.394505 & 96.399928 \\
\hline $1/32\times 1/32$&     19.726009 &  49.229994 & 49.230051 &  78.746202  &    98.114256 & 98.114609 \\
\hline $1/64\times 1/64$&    19.735907 &  49.318474 & 49.318478 &  78.904035  &    98.550189 & 98.550211 \\
\hline $1/128\times 1/128$&   19.738383 &  49.340632 & 49.340633 &  78.943626  &    98.659555 & 98.659556 \\
\hline Trend & $\nearrow $ & $\nearrow $   & $\nearrow$ & $\nearrow$ &  $\nearrow$& $\nearrow$\\
\hline
\end{tabular}
\end{table}

\begin{table}[htb!]
\centering \caption{Bilinear element for postprocessing method on unit
square}\label{Bilinear_Square}
\begin{tabular}{|c|c|c|c|c|c|c|}\hline
$h$&$\lambda_{1,h}$&$\lambda_{2,h}$&$\lambda_{3,h}$&$\lambda_{4,h}$ &$\lambda_{5,h}$ &$\lambda_{6,h}$ \\
\hline$1/8\times 1/8$&       20.506335 &   52.821611 &   54.413322&  90.985945 &     114.01626 &   114.01577   \\
\hline$1/16\times 1/16$&     19.929848 &   50.211622 &   50.588305&  82.000375 &     102.46652 &   102.46652 \\
\hline$1/32\times 1/32$&     19.786796 &   49.563576 &   49.656364&  79.717934 &     99.633341 &   99.633341 \\
\hline$1/64\times 1/64$&     19.751101 &   49.401888 &   49.424998&  79.147095 &     98.930015 &   98.930015 \\
\hline$1/128\times 1/128$&   19.742182 &   49.361487 &   49.367259&  79.004399 &     98.754514 &   98.754514 \\
\hline Trend & $\searrow $ & $\searrow $   & $\searrow$ & $\searrow$ &  $\searrow$& $\searrow$\\
\hline
\end{tabular}
\end{table}
\begin{table}[htb!]
\centering \caption{Biquadratic element for postprocessing method on
unit square}\label{Biquadratic_Square}
\begin{tabular}{|c|c|c|c|c|c|c|}\hline
$h$&$\lambda_{1,h}$&$\lambda_{2,h}$&$\lambda_{3,h}$&$\lambda_{4,h}$ &$\lambda_{5,h}$ &$\lambda_{6,h}$ \\
\hline$1/8\times 1/8$&       19.743647&   49.388394&    49.422024&    79.218966&     99.081741 &    99.083312   \\
\hline$1/16\times 1/16$&     19.739492&   49.350652&    49.352826&    78.974583&     98.721403 &    98.721409 \\
\hline$1/32\times 1/32$&     19.739227&   49.348188&    49.348325&    78.957968&     98.697654 &    98.697654 \\
\hline$1/64\times 1/64$&     19.739210&   49.348032&    49.348041&    78.956906&     98.696145 &    98.696145 \\
\hline$1/128\times 1/128$&   19.739209&   49.348023&    49.348023&    78.956840&     98.696050 &    98.696050 \\
\hline Trend & $\searrow $ & $\searrow $   & $\searrow$ & $\searrow$ &  $\searrow$& $\searrow$\\
\hline
\end{tabular}
\end{table}
Figure \ref{Error_UnitSquare} shows the errors of the eigenvalue
approximations by ECR and $EQ_1^{\rm rot}$ elements, postprocessing
methods with lowest order (linear and bilinear) and higher order
(quadratic and biquadratic) elements on the unit square. Since we
know the exact eigenvalues on the unit square, we can give the exact
errors of $\lambda_{j,h}$, $\widehat{\lambda}_{j,h}$ and
$\widetilde{\lambda}_{j,h}\ (j=1,2,3,4,5,6)$. Since the
eigenfunctions are smooth, the postprocessing with higher order
element can improve the convergence order. From Figure
\ref{Error_UnitSquare}, we can find the eigenvalue approximations
have the reasonable convergence order.
\begin{figure}[ht]
\centering
\includegraphics[width=7cm,height=7cm]{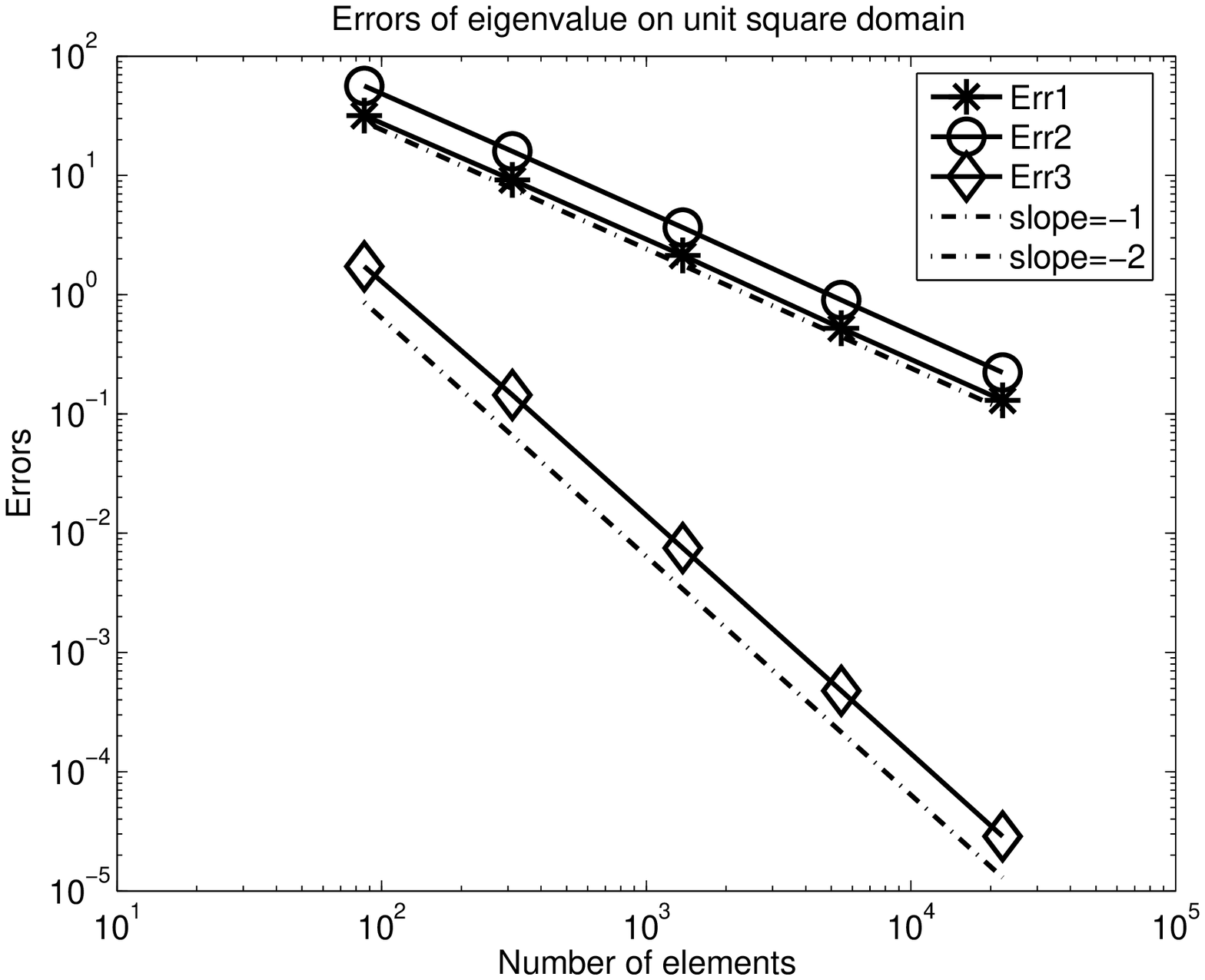}
\includegraphics[width=7cm,height=7cm]{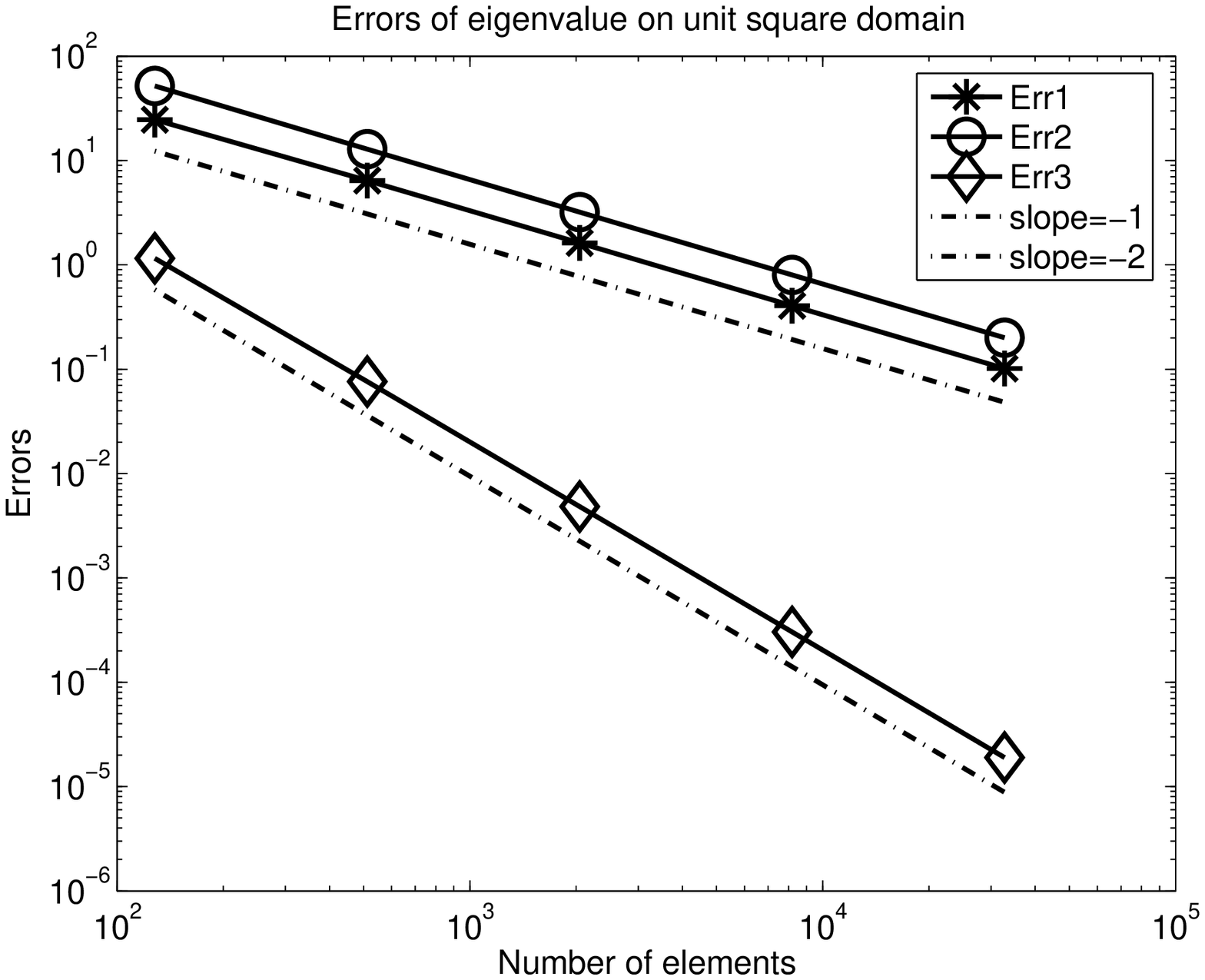}
\caption{The errors for the eigenvalue approximations on unit square
by ECR (left) and $EQ_1^{\rm rot}$ (right), where ${\rm
Err1}=\sum_{j=1}^6(\lambda_j-\lambda_{j,h})$, ${\rm
Err2}=\sum_{j=1}^6(\widehat{\lambda}_{j,h}-\lambda_j)$ and ${\rm
Err3}=\sum_{j=1}^6(\widetilde{\lambda}_{j,h}-\lambda_j)$}
\label{Error_UnitSquare}
\end{figure}

\subsection{Eigenvalue problem on the $L$ shape domain}
In this subsection, we solve the eigenvalue problem
(\ref{laplaceproblem}) on the $L$ shape domain  $\Omega=(-1,
1)\times (-1, 1)\setminus (-1,0)\times(-1,0)$. The aim here is also
to find the approximations of the first $6$ eigenvalues $\lambda_1,
\lambda_2,\cdots, \lambda_6$.

First, ECR element is applied to solve the eigenvalue problem and
then the linear finite element to do the postprocessing on the
series of meshes which are produced by Delaunay scheme. The
quadratic element is applied to implement Algorithm
\ref{Higher_Order_Upper_Algorithm}. Table \ref{ECR_LShape} shows the
eigenvalue approximations of the first $6$ eigenvalues and the
approximations by postprocessing method with
 linear element is presented in Table \ref{Linear_LShape}. Table
 \ref{Quadratic_LShape} shows the numerical results of the
 postprocessing Algorithm \ref{Higher_Order_Upper_Algorithm} with quadratic element.
 From Table \ref{ECR_LShape}, we can find the numerical approximations of ECR element are
 lower bounds of the exact eigenvalues. Tables \ref{Linear_LShape} and
 \ref{Quadratic_LShape} show the upper bounds of the numerical
 approximations by the postprocessing method using linear and quadratic
 elements.

\begin{table}[htb!]
\centering \caption{ECR element for eigenvalue problem on $L$ shape
domain}\label{ECR_LShape}
\begin{tabular}{|c|c|c|c|c|c|c|}\hline
$h$&$\lambda_{1,h}$&$\lambda_{2,h}$&$\lambda_{3,h}$&$\lambda_{4,h}$ &$\lambda_{5,h}$ &$\lambda_{6,h}$ \\
\hline $0.2$&      8.9126839 &  14.379736 &  18.200634 &  26.360803  &    27.160361 &  34.333785   \\
\hline $0.1$&      9.3655553 &  14.954062 &  19.323660 &  28.615430  &    30.449112 &  39.363462 \\
\hline $0.05$&     9.5439611 &  15.133230 &  19.625778 &  29.266600  &    31.451422 &  40.860929 \\
\hline $0.025$&    9.6066551 &  15.181407 &  19.711040 &  29.459256  &    31.775515 &  41.305926 \\
\hline $0.0125$&   9.6277308 &  15.193312 &  19.732422 &  29.506604  &    31.869752 &  41.426233 \\
\hline Trend & $\nearrow $ & $\nearrow $   & $\nearrow$ & $\nearrow$ &  $\nearrow$& $\nearrow$\\
\hline
\end{tabular}
\end{table}

\begin{table}[htb!]
\centering \caption{Linear element for postprocessing method on $L$
shape domain}\label{Linear_LShape}
\begin{tabular}{|c|c|c|c|c|c|c|}\hline
$h$&$\lambda_{1,h}$&$\lambda_{2,h}$&$\lambda_{3,h}$&$\lambda_{4,h}$ &$\lambda_{5,h}$ &$\lambda_{6,h}$ \\
\hline $0.2$&       10.578106 &   16.642707 &   22.460586&   36.034300 &    39.441562 &   52.785280   \\
\hline $0.1$&       9.9724152 &   15.610430 &   20.469954&   31.132236 &    34.109328 &   44.593560 \\
\hline $0.05$&      9.7438532 &   15.309456 &   19.934653&   29.958076 &    32.522745 &   42.389559 \\
\hline $0.025$&     9.6728756 &   15.224811 &   19.786815&   29.627051 &    32.086619 &   41.719036 \\
\hline $0.0125$&    9.6521963 &   15.203927 &   19.750613&   29.546869 &    31.965675 &   41.541732 \\
\hline Trend & $\searrow $ & $\searrow $   & $\searrow$ & $\searrow$ &  $\searrow$& $\searrow$\\
\hline
\end{tabular}
\end{table}
\begin{table}[htb!]
\centering \caption{Quadratic element for postprocessing method on
$L$ shape domain}\label{Quadratic_LShape}
\begin{tabular}{|c|c|c|c|c|c|c|}\hline
$h$&$\lambda_{1,h}$&$\lambda_{2,h}$&$\lambda_{3,h}$&$\lambda_{4,h}$ &$\lambda_{5,h}$ &$\lambda_{6,h}$ \\
\hline $0.2$&        9.7067976 &   15.233894 &    19.813091&   29.809049 &     32.402601 &   42.472832  \\
\hline $0.1$&        9.6691745 &   15.201608 &    19.744920&   29.541711 &     32.011248 &   41.586610\\
\hline $0.05$&       9.6496097 &   15.197646 &    19.739617&   29.522960 &     31.939132 &   41.497609\\
\hline $0.025$&      9.6432648 &   15.197293 &    19.739232&   29.521572 &     31.921467 &   41.481392\\
\hline $0.0125$&     9.6414840 &   15.197258 &    19.739210&   29.521488 &     31.916952 &   41.477779\\
\hline Trend & $\searrow $ & $\searrow $   & $\searrow$ & $\searrow$ &  $\searrow$& $\searrow$\\
\hline
\end{tabular}
\end{table}

Then, $EQ_1^{\rm rot}$ element is applied to solve the eigenvalue
problem and then the bilinear finite element to do the
postprocessing on the series of uniform rectangle meshes.
Biquadratic element is employed to implement Algorithm
\ref{Higher_Order_Upper_Algorithm}.  Table \ref{EQ1Rot_LShape} shows
the eigenvalue approximations of the first $6$ eigenvalues and the
approximations by postprocessing method with
 bilinear element is presented in Table \ref{Bilinear_LShape}. Table
 \ref{Biquadratic_LShape} shows the numerical results of the
 postprocessing Algorithm \ref{Higher_Order_Upper_Algorithm} with biquadratic element.
 From Table \ref{EQ1Rot_LShape}, we can find the numerical approximations of $EQ_1^{\rm rot}$ element are
 lower bounds of the exact eigenvalues. Tables \ref{Bilinear_LShape} and
 \ref{Biquadratic_LShape} show the upper bounds of the numerical
 approximations by the postprocessing method using bilinear and biquadratic
 elements.

\begin{table}[htb!]
\centering \caption{$EQ_1^{\rm rot}$ element for eigenvalue problem
on $L$ shape domain}\label{EQ1Rot_LShape}
\begin{tabular}{|c|c|c|c|c|c|c|}\hline
$h$&$\lambda_{1,h}$&$\lambda_{2,h}$&$\lambda_{3,h}$&$\lambda_{4,h}$ &$\lambda_{5,h}$ &$\lambda_{6,h}$ \\
\hline $1/4\times 1/4$&       9.2784846 &   15.049120 &  19.477978&   28.869068  &     30.381898 &  39.579205   \\
\hline $1/8\times 1/8$&       9.5063501 &   15.154836 &  19.675337&   29.348136  &     31.415853 &  40.855247 \\
\hline $1/16\times 1/16$&      9.5896364 &   15.185968 &  19.723326&   29.477224  &     31.747401 &  41.282853 \\
\hline $1/32\times 1/32$&     9.6205870 &   15.194336 &  19.735243&   29.510335  &     31.855244 &  41.413996 \\
\hline $1/64\times 1/64$&    9.6323169 &   15.196509 &  19.738218&   29.518686  &     31.891900 &  41.454533 \\
\hline Trend & $\nearrow $ & $\nearrow $   & $\nearrow$ & $\nearrow$ &  $\nearrow$& $\nearrow$\\
\hline
\end{tabular}
\end{table}

\begin{table}[htb!]
\centering \caption{Bilinear element for postprocessing method on
$L$ shape domain}\label{Bilinear_LShape}
\begin{tabular}{|c|c|c|c|c|c|c|}\hline
$h$&$\lambda_{1,h}$&$\lambda_{2,h}$&$\lambda_{3,h}$&$\lambda_{4,h}$ &$\lambda_{5,h}$ &$\lambda_{6,h}$ \\
\hline $1/4\times 1/4$&        10.164089 &    15.980053 &   20.773284&  32.476652 &     35.807091 &   48.127411   \\
\hline $1/8\times 1/8$&        9.7907347 &    15.392383 &   19.994161&  30.245451 &     32.945969 &   43.311982 \\
\hline $1/16\times 1/16$&      9.6867567 &    15.246072 &   19.802707&  29.701314 &     32.192812 &   41.959311 \\
\hline $1/32\times 1/32$&      9.6552886 &    15.209476 &   19.755068&  29.566371 &     31.992010 &   41.603765 \\
\hline $1/64\times 1/64$&      9.6451377 &    15.200312 &   19.743173&  29.532700 &     31.936225 &   41.509772 \\
\hline Trend & $\searrow $ & $\searrow $   & $\searrow$ & $\searrow$ &  $\searrow$& $\searrow$\\
\hline
\end{tabular}
\end{table}
\begin{table}[htb!]
\centering \caption{Biquadratic element for postprocessing method on
$L$ shape domain}\label{Biquadratic_LShape}
\begin{tabular}{|c|c|c|c|c|c|c|}\hline
$h$&$\lambda_{1,h}$&$\lambda_{2,h}$&$\lambda_{3,h}$&$\lambda_{4,h}$ &$\lambda_{5,h}$ &$\lambda_{6,h}$ \\
\hline $1/4\times 1/4$&      9.6733499&    15.208409&     19.749420&    29.581107&     32.072149 &    41.784684   \\
\hline $1/8\times 1/8$&      9.6525125&    15.198129&     19.739857&    29.525433&     31.949976 &    41.516052 \\
\hline $1/16\times 1/16$&    9.6447652&    15.197334&     19.739249&    29.521742&     31.925488 &    41.485165 \\
\hline $1/32\times 1/32$&    9.6417221&    15.197261&     19.739211&    29.521499&     31.917575 &    41.478309 \\
\hline $1/64\times 1/64$&    9.6405166&    15.197253&     19.739209&    29.521483&     31.914580 &    41.475981 \\
\hline Trend & $\searrow $ & $\searrow $   & $\searrow$ & $\searrow$ &  $\searrow$& $\searrow$\\
\hline
\end{tabular}
\end{table}

Figure \ref{Error_LShape} shows the errors of the eigenvalue
approximations by ECR and $EQ_1^{\rm rot}$ elements, postprocessing
methods with lowest order (linear and bilinear) and higher order
(quadratic and biquadratic) elements on the $L$ shape domain. Since
we don't know the exact eigenvalues on the $L$ shape domain, we can
only give the errors of $\widehat{\lambda}_{j,h}-\lambda_{j,h}$ and
$\widetilde{\lambda}_{j,h}-\lambda_{j,h}\ (j=1,2,3,4,5,6)$. Since
the eigenfunctions here are singular, the convergence order by
postprocessing with higher order element can not be improved which
is shown in Figure \ref{Error_LShape}. 
\begin{figure}[ht]
\centering
\includegraphics[width=7cm,height=7cm]{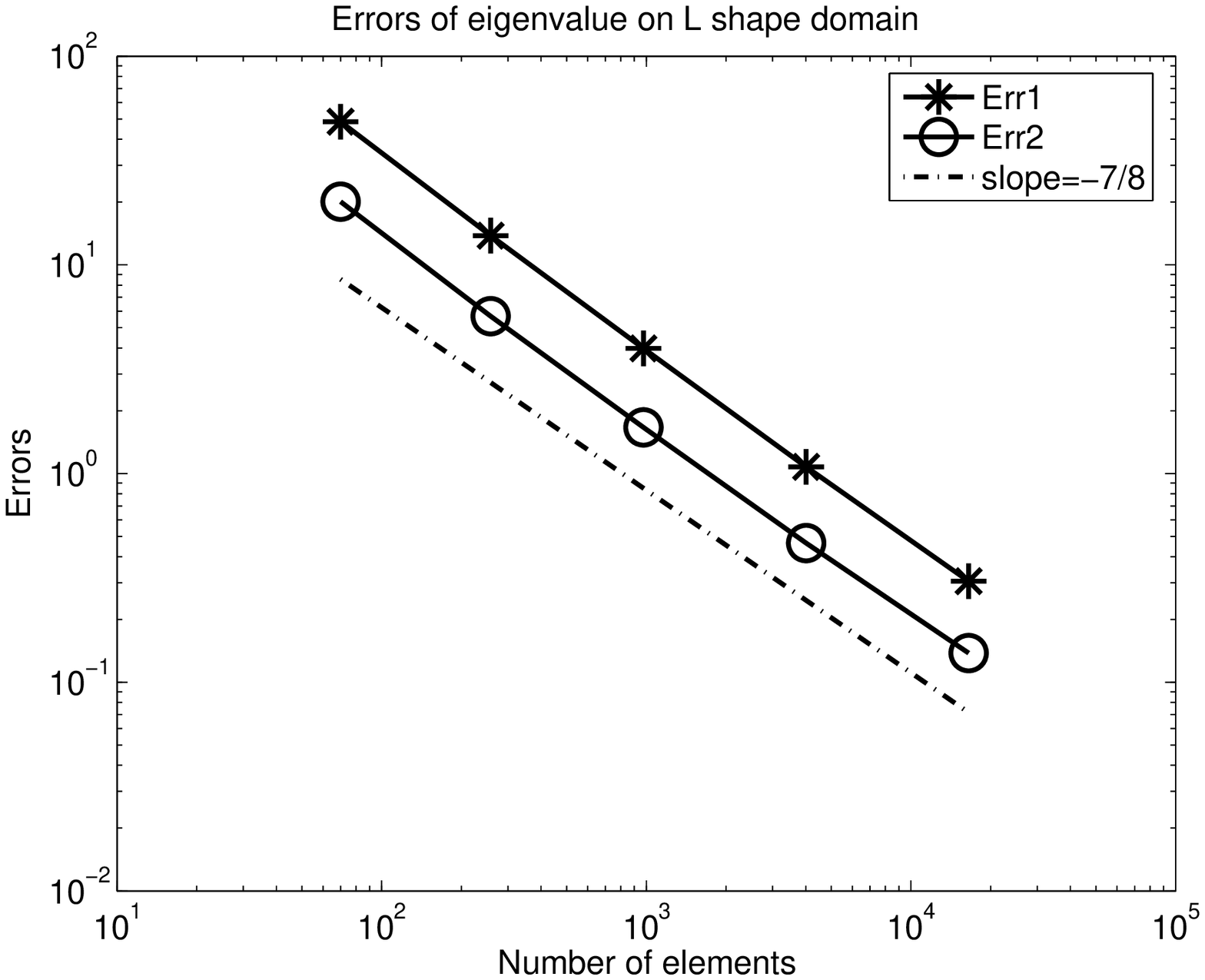}
\includegraphics[width=7cm,height=7cm]{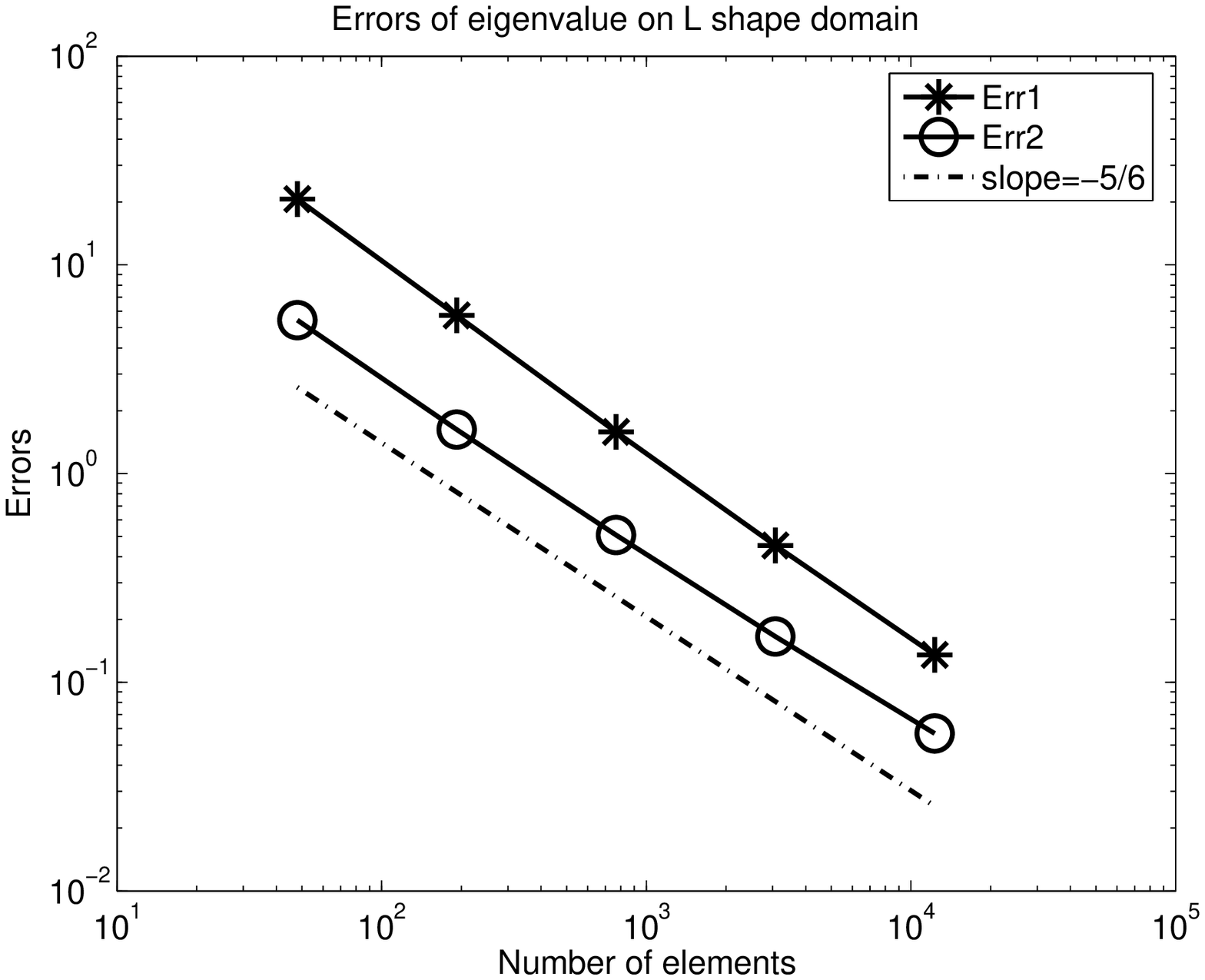}
\caption{The errors for the eigenvalue approximations on $L$ shape
domain by ECR (left) and $EQ_1^{\rm rot}$ (right), where ${\rm
Err1}=\sum_{j=1}^6(\widehat{\lambda}_{j,h}-\lambda_{j,h})$ and ${\rm
Err2}=\sum_{j=1}^6(\widetilde{\lambda}_{j,h}-\lambda_{j,h})$}
\label{Error_LShape}
\end{figure}

\section{Concluding remarks}
In this paper, we analyzed the lower bound approximation of
eigenvalue problem by nonconforming elements (ECR and $EQ_1^{\rm
rot}$) and also two postprocessing methods to obtain the upper bound
of the eigenvalues. Especially, based on the lower bound
approximations, a new postprocessing method which can produce not
only higher order convergence but also upper bound approximation of
the eigenvalues is proposed. This improves the efficiency of solving eigenvalue
problems and obtain the accurate a posteriori error estimates by the
lower and upper bounds of eigenvalues.

We should point out that all the methods and results here can be
easily extended to the three dimension case. We listed some related
space and results. The corresponding $ECR$ element in
$\mathcal{R}^3$ is defined as
\begin{align}
V_h^{NC}:= &\Big\{v\in L^2({\it  \Omega}):v|_{K}\in
{\rm span}\{1,x,y,z,x^2+y^2+z^2\},\int_{\ell} v|_{K_1}{\rm d}s=\int_{\ell} v|_{K_2}{\rm d}s,\nonumber\\
&\hspace{1cm}\text{ when }K_1\cap K_2=\ell,\ \text{and
}\int_{\ell}v|_{K}{\rm d}s=0,  \text{ if } K\cap\partial {\it
\Omega}=\ell\Big\},
\end{align}
where $K, K_1, K_2\in \mathcal{T}_h$.

The corresponding $EQ_1^{\rm rot}$ element in $\mathcal{R}^3$ is
defined as
\begin{align}
V_h^{NC}:= &\Big\{v\in L^2(\Omega):v|_{K}\in
{\rm span}\{1,x,y,z,x^2,y^2,z^2\},\int_{\ell} v|_{K_1}ds=\int_{\ell} v|_{K_2}ds,\nonumber\\
&\hspace{1cm}\text{if }K_1\cap K_2=\ell, \ \text{ and
}\int_{\ell}v|_{K}ds=0, \text{ if} K\cap\partial \Omega=\ell\Big\},
\end{align}
where $K, K_1, K_2\in \mathcal{T}_h$.

These two nonconforming elements can be used in the three
dimensional case to get the lower bounds of eigenvalues and the
corresponding postprocessing methods can also be constructed to
obtain upper bounds of the eigenvalues.

\end{document}